\newtheorem{theorem}{Theorem}
\newtheorem{lemma}[theorem]{Lemma}
\theoremstyle{definition}
\newtheorem{definition}[theorem]{Definition}
\newtheorem{remark}[theorem]{Remark}
\newtheorem{example}[theorem]{Example}
\numberwithin{theorem}{section}
\numberwithin{equation}{section}
\begin{document}
\title{ Signed Partitions and Rogers-Ramanujan type Identities}

\author{Abdulaziz M. Alanazi, Augustine O. Munagi, Andrew V. Sills}
\date{\today}
\maketitle

\begin{abstract}
George Andrews [\emph{Bull. Amer. Math. Soc.}, 2007, 561--573] introduced the idea of a
\emph{signed partiton} of an integer; similar to an ordinary integer partitions, but where some of the
parts could be negative.   Further, Andrews reinterpreted the classical G\"ollnitz--Gordon 
partition identities in terms of signed partitions.  In the present work, we provide interpretations
of the sum sides of Rogers--Ramanujan type identities, including a new signed partition 
interpretation of the G\"ollnitz--Gordon identities, different from that of Andrews.
Both analytic and bijective proofs are presented.
\end{abstract}

\section{Introduction}\label{introd}
We begin with a classical definition.
\begin{definition}
A \emph{partition} $\lambda$ of an integer $n$ is a nonincreasing finite sequence of positive integers 
$(\lambda_1, \lambda_2, \dots, \lambda_\ell)$ that sum to $n$.  Each $\lambda_i$ is called a \emph{part} of the
partition $\lambda$.  Here, $n$ is called the \emph{size} of $\lambda$ and may be denoted $|\lambda| = n$.
The \emph{length} $\ell = \ell(\lambda)$ of $\lambda$ is the number of parts in $\lambda$.
\end{definition}  
For example, the partitions of $3$ are $(1,1,1)$, $(1,2)$ and
$(3)$.    

G. E. Andrews~\cite{Andrews2007} introduced the notion of \emph{signed partition}, where, unlike with
ordinary partitions, some of the parts could be negative integers.   In~\cite{Andrews2007}, he reinterpreted some classical
partition identities in terms of signed partitions.

Let us formally define a signed partition as follows:
\begin{definition}
A {\em signed partition} $\sigma$ of an integer $n$ is a pair $(\pi, \nu)$ of ordinary partitions such that $|\pi| - | \nu | = n$.  The members of $\pi$ (resp. $\nu$) are the \emph{positive parts}
(resp. \emph{negative parts}) of the signed partition $\sigma$.
\end{definition}
Thus, $\sigma = \big( (1,1,2,3,3,3), (1,2,3) \big)$ is a signed partition of $13-6 = 7$.
%of length $6 + 3 = 9$.  
Alternatively, we could choose to write $\sigma$ as $(1,1,2,3,3,3,-1,-2,-3)$.

In the sequel, we will provide signed partition theoretic interpretations of some classical partition identities.
% where $\lambda_1\geq \lambda_2\geq \cdots$ and $\lambda_1+\lambda_2+\cdots=n$, some $\lambda_i$ possibly be negative.\\ 
%\textcolor{red}{Revise this definition to include signed partitions with increasing positive parts}\\
For example, the combinatorial version of the first Rogers--Ramanujan identities is as follows:
let $RR_1(n)$ denote the number of (ordinary) partitions of $n$ in which the difference between parts is at least $2$.
Then $RR_1(n)$ equals the number of (ordinary) partitions of $n$ into parts congruent to $1$ or $4 \pmod{5}$.
We will show in Theorem~\ref{SRR_1} below that $RR_1(n)$ also equals
the number of signed partitions of $n$ in which smallest positive part is even, positive parts differ by at least by $3$ and alternate in parity, and negative parts are at most the number of positive parts. 
Analogous results are presented for other Rogers--Ramanujan type identities.

\section{Preliminary results}

The following lemma will be used to establish several of our identities combinatorially.
\begin{lemma}\label{lembit}
Let $B=(b_1,b_2,\ldots)$ be a nonempty binary sequence with at least one non-zero entry. There is a unique partition of least weight associated with $B$ which is gap-free with smallest part $1$ (ignoring a possible initial string of 0's).
\end{lemma}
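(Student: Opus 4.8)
The plan is to make the (only implicitly introduced) notion of a partition \emph{associated with} a binary sequence fully explicit, and then deduce both the shape of the minimiser and its uniqueness from a single monotonicity observation. Write the positions of the $1$'s of $B$ as $i_1<i_2<\dots<i_r$; the hypotheses that $B$ is nonempty with at least one non-zero entry guarantee $r\ge 1$, and the block $b_1=\dots=b_{i_1-1}=0$ is exactly the initial string of $0$'s that the statement tells us to ignore. I will read the bits of $B$ from left to right, starting at the first $1$, as instructions for listing the parts of a partition in weakly increasing order: a $1$ introduces a new part strictly larger than the current one, a $0$ repeats the current part. A partition is \emph{associated with} $B$ precisely when it can be produced in this way; note that a leading $0$ has no current part to repeat, which is exactly why such zeros must be discarded.

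First I would exhibit the candidate: taking every ``new part'' step to be an increase by exactly $1$ and the first part to be $1$ produces a partition $\mu_0$ whose set of distinct parts is $\{1,2,\dots,r\}$ (with the multiplicity of the part $s$ equal to the length of the run of $B$ that introduces it). By construction $\mu_0$ is associated with $B$, is gap-free, and has smallest part $1$, which disposes of existence. Next I would establish minimality: given any partition $\mu$ associated with $B$, run the two constructions for $\mu$ and for $\mu_0$ in parallel, one bit at a time; a one-line induction shows that the $j$-th smallest part of $\mu$ is at least the $j$-th smallest part of $\mu_0$ for every $j$, and summing over $j$ yields $|\mu|\ge|\mu_0|$. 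For uniqueness, equality here forces equality in every term, hence forces each ``new part'' step to be a $+1$ step and the first part to be $1$, i.e.\ $\mu=\mu_0$. Therefore $\mu_0$ is the unique partition of least weight associated with $B$, and we have already checked it is gap-free with smallest part $1$.

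The delicate point — the one I would take most care with — is the set-up rather than the argument: one must fix the meaning of ``associated with $B$'' so that it genuinely allows arbitrary jumps of size $\ge 1$ (so that the fibre over $B$ is large and ``least weight'' is a real condition), while simultaneously pinning down that the \emph{smallest} admissible jump is $+1$ and the smallest admissible first part is $1$, and noting that leading $0$'s play no role. Once this is in place, the monotone dependence of the construction on the chosen jumps makes both the weight inequality and the uniqueness essentially automatic, with no computation beyond the termwise comparison above.
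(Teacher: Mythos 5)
Your proof strategy (exhibit a greedy minimiser, then get both the weight inequality and uniqueness from a termwise comparison) is sound, but the set-up step you yourself flag as the delicate one is where the proposal diverges from the lemma the paper actually needs. In the paper, ``associated with $B$'' means termwise congruent to $B$ modulo $2$: the proof defines $t(B)$ by $t_1=b_1$ and $t_j=t_{j-1}$ or $t_{j-1}+1$ according as $b_j=b_{j-1}$ or $b_j\neq b_{j-1}$, i.e.\ the entry increases (by the minimal odd amount, namely $1$) exactly when the \emph{bit changes}, and the whole point of the lemma, as used in the bijections for Theorems \ref{thmpn}, \ref{SD}, \ref{SRR_1}, is the property $t(B)\equiv B \pmod 2$ termwise together with $t_j\le j$. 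Your reading instead decrees that every $1$ forces a strict increase and every $0$ forces a repeat. These are not the same association and they produce different minimisers: for $B=(1,1)$ your construction gives $(1,2)$ while the paper's $t(B)=(1,1)$; for the paper's own example $B=(0,1,1,0,1,0,0)$ you would get $(1,2,2,3,3,3)$, whereas $t(B)=(0,1,1,2,3,4,4)$. In particular your minimiser fails the termwise parity agreement with $B$, so the statement you prove cannot play the role the lemma is introduced for.

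The good news is that your argument transfers almost verbatim once the fibre is defined correctly. Take ``associated with $B$'' to mean: a weakly increasing sequence of nonnegative integers, of the same length as $B$, with $j$-th entry congruent to $b_j$ modulo $2$ (the initial entries corresponding to a leading block of $0$'s in $B$ are then $0$ and are the parts one ignores). The admissible steps are: an even step (minimally $0$) when $b_j=b_{j-1}$, an odd step (minimally $1$) when $b_j\neq b_{j-1}$, and a first nonzero entry that is odd, minimally $1$. Greedy minimisation of each step reproduces exactly the recursion \eqref{eqbit}; your termwise induction then gives $|\mu|\ge|t(B)|$ for every associated $\mu$, equality forces every step to be minimal and hence $\mu=t(B)$, and the minimal choices make $t(B)$ gap-free with smallest nonzero part $1$ and $t_j\le j$. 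With that correction your proof is complete and is, in substance, a more careful version of the paper's (the paper only writes down the construction and asserts the properties); as it stands, however, the proposal proves a different lemma.
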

\begin{proof}
The partition, denoted by $t(B)=(t_1,t_2,\ldots)$, is obtained as follows.
Set $t_1=b_1$, then for any index $j>1$, 
\begin{equation}\label{eqbit}
t_j =\begin{cases} t_{j-1}& \text{if}\ b_j=b_{j-1}\\
t_{j-1}+1 & \text{if}\ b_j\neq b_{j-1}.
\end{cases}
\end{equation}
Observe that $t_j\leq j$ for all $j$, and $B\equiv t(B)$ (mod 2) term-wise.

For example, if $B=(0,1,1,0,1,0,0)$ then using \eqref{eqbit} we obtain $t(B)=(0,1,1,2,3,4,4)$.
\end{proof}

\bigskip

The ordinary partitions of $n$ will be shown to be equinumerous with a certain class of 
signed partitions of $n$ as set forth in the following theorem.

\begin{theorem}\label{thmpn}
Let $p(n)$ denote the number of ordinary partitions of $n$ and let $p_{-1}(n)$ denote the number of signed partitions of $n$ in which positive parts alternate in parity with smallest 
part even, and negative parts are distinct and at most the number of positive parts. Then 
    \begin{equation}
        p(n)=p_{-1}(n).
    \end{equation}
\end{theorem}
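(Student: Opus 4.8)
The plan is to give first an analytic proof via generating functions and then sketch a bijective proof. For the analytic part, I would classify a signed partition counted by $p_{-1}(n)$ according to the number $k$ of its positive parts. Since the positive parts are distinct (alternation of parity forces distinctness), alternate in parity, and have smallest part even, the smallest positive part is an even integer $\ge 2$ and consecutive positive parts, listed in nonincreasing order, differ by an odd number $\ge 1$. Hence the positive parts are obtained from the staircase $(k+1,k,\dots,3,2)$ by adding an arbitrary partition into at most $k$ even parts; as such a partition is twice a partition with at most $k$ parts, the positive parts with exactly $k$ of them have generating function $q^{(k+1)(k+2)/2-1}/(q^{2};q^{2})_k$, where $(a;q)_k=\prod_{j=0}^{k-1}(1-aq^{j})$. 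The negative parts range over all subsets of $\{1,2,\dots,k\}$, contributing $\prod_{m=1}^{k}(1+q^{-m})$. Therefore
\[
 \sum_{n\ge 0} p_{-1}(n)\,q^{n}
 =\sum_{k\ge 0}\frac{q^{(k+1)(k+2)/2-1}}{(q^{2};q^{2})_k}\prod_{m=1}^{k}\left(1+q^{-m}\right).
\]

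Next I would simplify the right-hand side. Using $\prod_{m=1}^{k}(1+q^{-m})=q^{-k(k+1)/2}\prod_{m=1}^{k}(1+q^{m})=q^{-k(k+1)/2}\,(q^{2};q^{2})_k/(q;q)_k$ together with the elementary identity $\tfrac{(k+1)(k+2)}{2}-1-\tfrac{k(k+1)}{2}=k$, the factor $(q^{2};q^{2})_k$ cancels and the whole sum collapses to $\sum_{k\ge 0} q^{k}/(q;q)_k$. By the classical identity $\sum_{k\ge 0} z^{k}/(q;q)_k=1/(z;q)_\infty$ (Euler) specialized to $z=q$, this equals $1/(q;q)_\infty=\sum_{n\ge 0}p(n)q^{n}$; comparing coefficients of $q^{n}$ proves the theorem.

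For a bijective proof I would compose standard partition maps. Starting from an ordinary partition $\rho$ of $n$ with exactly $k$ parts, subtract $1$ from each part to obtain $\rho^{-}$ with at most $k$ parts, then conjugate to a partition $\mu$ with all parts $\le k$. For each $m\in\{1,\dots,k\}$ split the multiplicity of $m$ in $\mu$ as $2a_{m}+\varepsilon_{m}$ with $\varepsilon_{m}\in\{0,1\}$; record the set $S=\{m:\varepsilon_{m}=1\}$ and let $\tau$ be the conjugate of the partition whose multiplicity of $m$ is $a_{m}$, so $\tau$ has at most $k$ parts. Then output the signed partition with positive parts $P_{i}=(k+2-i)+2\tau_{i}$ for $i=1,\dots,k$ (padding $\tau$ with zeros) and negative parts $\{1,\dots,k\}\setminus S$. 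Every step is invertible, so this is a bijection; one checks that $P_{1}>\cdots>P_{k}\ge 2$ with $P_{k}$ even and successive differences odd (so the parities alternate with smallest even), that the negative parts are distinct and at most $k$, and that $\sum_{i}P_{i}-\sum_{m\notin S}m=n$ — the last point again reducing to $\tfrac{(k+1)(k+2)}{2}-1=k+\tfrac{k(k+1)}{2}$.

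The generating-function bookkeeping is routine; the only delicate point in either approach is verifying that the ``staircase plus doubled partition'' description of the admissible positive parts is correct and reversible and that the total size is preserved, both of which rest on the numerical coincidence $\tfrac{(k+1)(k+2)}{2}-1-\tfrac{k(k+1)}{2}=k$. In the analytic proof the remaining content is simply invoking Euler's identity; in the bijective proof it is checking invertibility of the multiplicity-splitting and conjugation steps. I would also test the maps against $n=2$ and $n=3$ as a sanity check before writing the details.
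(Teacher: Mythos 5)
Your proof is correct. The generating-function half is essentially the paper's own computation run in reverse: the paper starts from Euler's $\sum_n q^n/(q;q)_n$, rewrites $1/(q;q)_n=(-q;q)_n/(q^2;q^2)_n$ and extracts $q^{n(n+1)/2}$ to reach $q^{2+3+\cdots+(n+1)}(-q^{-1};q^{-1})_n/(q^2;q^2)_n$, whereas you build that series directly from the combinatorial description (staircase $(k+1,\dots,3,2)$ plus twice a partition with at most $k$ parts, times $\prod_{m=1}^k(1+q^{-m})$) and collapse it to $\sum_k q^k/(q;q)_k$; the key input (Euler's identity) and the exponent bookkeeping are the same. The bijective half is where you genuinely diverge in mechanism: the paper fixes the parities of $\lambda$ via Lemma \ref{lembit}, adding the minimal gap-free partition $t(A)$ determined by the parity-defect sequence $A=(\lambda+B_k)\bmod 2$ and taking negative parts from the conjugate $t(A)'$, while you subtract $1$ from each part, conjugate, split multiplicities into doubled parts plus a $0/1$ vector, conjugate back onto the staircase, and take the negative parts as the complement of the parity set in $\{1,\dots,k\}$. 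Your construction makes the ``staircase plus doubled partition'' structure of the positive parts explicit and its invertibility transparent; in fact it appears to produce the same map as the paper's $f$ (it reproduces the paper's example $\lambda=(1,1,1,2,3,6,10,10,16)\mapsto(22,15,14,9,6,5,4,3,2,-1,-2,-3,-7,-8,-9)$). The paper's formulation via Lemma \ref{lembit} has the advantage of porting with only a change of the seed sequence $B_k$ to Theorems \ref{SD}, \ref{SRR_1} and \ref{SRR_2}, whereas your multiplicity-splitting description is tailored to the unrestricted case; conversely, your version needs no auxiliary lemma and the size check reduces to the single identity $\tfrac{(k+1)(k+2)}{2}-1-\tfrac{k(k+1)}{2}=k$.
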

Here and throughout, we shall use the standard notation $(a;q)_n := \prod_{i=0}^{n-1} (1-aq^i)$.
Also, we mention that $q$ may be regarded as a formal variable, or alternatively as a complex
variable as long as $|q| < 1$.
\begin{proof}[Generating function proof]
    \begin{equation}
        \sum_{n=0}^{\infty}p(n)q^n=\sum_{n=0}^{\infty}\frac{q^n}{(q;q)_n}=\sum_{n=0}^{\infty}\frac{q^n(-q;q)}{(q^2;q^2)_n}=\sum_{n=0}^{\infty}\frac{q^{2+3+\cdots+(n+1)}(-q^{{-1}};q^{-1})}{(q^2;q^2)_n}=\sum_{n=0}^{\infty}p_{-1}(n)q^n,
    \end{equation}
where the first equality is attributed to Euler in~\cite[p. 19, Eq. (2.2.5) with $t=q$]{Andrews1976}.    
\end{proof}

\begin{proof}[Bijective Proof]
Let $p[n]$ and $p_{-1}[n]$ denote the corresponding sets of ordinary and signed partitions respectively.
We describe a map $f: p[n]\rightarrow p_{-1}[n]$.
Let $\lambda\in p[n]$ where $\lambda = (\lambda_1,\lambda_2,\ldots,\lambda_k)$ with $1\leq \lambda_1\leq \lambda_2\leq \cdots \leq \lambda_k$.
Then
\begin{equation}\label{eqmap}
f(\lambda)=\begin{cases} \lambda & \text{if $\lambda_i\equiv i-1$ (mod 2)}\ \forall\, i\\
\gamma & \text{otherwise},
\end{cases}
\end{equation}
where $\gamma$ is an signed partition of $n$ obtained as follows. Let $B_k$ denote the unique parity-alternating binary sequence of length $k$ with first term 0, $B_k=(0,1,0,1,\ldots)$.
\begin{itemize}
\item[(i)] Define $A :=(\lambda+B_k)$ (mod 2);
\item[(ii)] Use Lemma \ref{lembit} to obtain $t(A)$ and denote its conjugate by $t(A)'$;
\item[(iii)] Set $\gamma$ to be the signed partition whose positive parts consist of $\lambda+t(A)$ and whose negative part sizes consist of $t(A)'$.  
\end{itemize}

Conversely, let $\gamma\in p_{-1}[n]$. If $\gamma$ has no negative parts, then $f^{-1}(\gamma)=\gamma$. Otherwise denote the sets of the positive parts and the  negative part sizes of $\gamma$ by $U$ and $V$ respectively. Then $f^{-1}(\gamma) = U-V'$, where $V'$ may be padded with initial 0's if necessary.

It is clear that $A\equiv t(A)$ (mod 2) termwise, by construction. Thus if $\gamma=(\gamma_1,\ldots,\ldots,\gamma_k)$ and $t(A)=(t_1,\ldots,\ldots,t_k)$, then for any index $j$, we have $\gamma_j\equiv \lambda_j+t_j\equiv \lambda_j+(\lambda_j+(j-1)\, \text{mod 2})\equiv j-1$ (mod 2). Since $t_k\leq k$ and $t(A)$ is gapfree with 1, the conjugate $t(A)'$ has distinct parts with largest part at most $k$. Thus in step (iii), $\gamma\in p_{-1}[n]$. So $f$ is a bijection. 

\vskip 5pt
For example, consider $\lambda=(1, 1, 1, 2, 3, 6, 10, 10, 16)\in p[50]$. Then $A\equiv \lambda+(0,1,0,1,0,1,0,1,0)\equiv (1, 0, 1, 1, 1, 1, 0, 1, 0)$ (mod 2). So $t(A)=(1, 2, 3, 3, 3, 3, 4, 5, 6)$, $t(A)'=(1,2,3,7,8,9)$ and $\lambda+t(A) = (2, 3, 4, 5, 6, 9, 14, 15, 22)$. Hence $f(\lambda)=(2, 3, 4, 5, 6, 9, 14, 15, 22,-1,-2,-3,-7,-8,-9)\in p_{-1}[50]$.
\end{proof}

\begin{theorem}\label{SD}
Let $D(n)$ denote the number of partitions of $n$ into distinct parts and let $D_{-1}(n)$ denote the number of signed partitions of $n$ in which positive parts are even and distinct, and negative parts are at most the number of positive parts. Then \begin{equation}
        D(n)=D_{-1}(n).
    \end{equation}

\end{theorem}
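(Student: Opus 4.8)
The plan is to prove Theorem~\ref{SD} in exact parallel to Theorem~\ref{thmpn}, both analytically and bijectively, since the class of signed partitions counted by $D_{-1}(n)$ is the ``distinct parts'' analogue of the one in Theorem~\ref{thmpn}. For the generating function proof, I would start from Euler's classical identity $\sum_{n\ge 0} D(n) q^n = (-q;q)_\infty = \sum_{n\ge 0} q^{\binom{n+1}{2}}/(q;q)_n$, then manipulate the summand to expose a signed-partition interpretation. The key algebraic step is to write $q^{\binom{n+1}{2}}/(q;q)_n$ in a form where the $n$ positive parts are forced to be even and distinct while $1/(q;q)_n$ is reinterpreted as a sum over negative parts each of size at most $n$. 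Concretely, replacing $q$ by appropriate powers: the factor $1/(q;q)_n = 1/\big((1-q)(1-q^2)\cdots(1-q^n)\big)$ is the generating function for partitions into parts $\le n$, i.e.\ the negative parts; and multiplying the numerator out, $q^{\binom{n+1}{2}}$ together with a substitution $q\mapsto q^2$ somewhere should yield $q^{2+4+\cdots+2n}$, the generating function for $n$ distinct even positive parts $\ge 2, 4, \ldots, 2n$ after the usual ``staircase'' shift. So I expect a chain of equalities of the form
\begin{equation*}
\sum_{n=0}^\infty D(n)q^n = \sum_{n=0}^\infty \frac{q^{\binom{n+1}{2}}}{(q;q)_n} = \sum_{n=0}^\infty \frac{q^{2+4+\cdots+2n}}{(q;q)_n}\cdot(\text{sign bookkeeping}) = \sum_{n=0}^\infty D_{-1}(n) q^n,
\end{equation*}
where the middle manipulation is the part requiring care to get the exponents and the negative-part contribution exactly right.

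For the bijective proof I would mimic the map $f$ from Theorem~\ref{thmpn}. Given $\lambda = (\lambda_1 < \lambda_2 < \cdots < \lambda_k)$ a partition into distinct parts, if all parts are already even I would leave $\lambda$ fixed (it is then already in $D_{-1}[n]$ with no negative parts). Otherwise I would use the parity sequence of $\lambda$: set $A \equiv \lambda \pmod 2$ (since the target requires all positive parts \emph{even}, the relevant ``correction'' binary sequence is the all-zeros target parity, so $A$ is just $\lambda \bmod 2$), apply Lemma~\ref{lembit} to get the least-weight gap-free partition $t(A)$ with smallest part $1$, and let $\gamma$ have positive parts $\lambda + t(A)$ and negative part sizes $t(A)'$ (the conjugate). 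Because $t_j \le j$ and $\lambda$ has distinct parts, $\lambda + t(A)$ stays strictly increasing and becomes termwise even; because $t(A)$ is gap-free from $1$ with largest value $\le k$, the conjugate $t(A)'$ has distinct parts of size $\le k$, which is exactly the number of positive parts. Size is preserved since $|t(A)| = |t(A)'|$. The inverse takes a $\gamma \in D_{-1}[n]$ with positive parts $U$ and negative part sizes $V$, and returns $U - V'$ (padding $V'$ with leading zeros to length $|U|$), which one checks is a partition into distinct parts.

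The main obstacle I anticipate is verifying that this bijection is well-defined and invertible in the ``distinct parts'' setting, specifically the claim that $\lambda + t(A)$ is again a \emph{strictly increasing} sequence of even numbers and that the map is a genuine inverse. For strict monotonicity, I would note $\lambda_{j+1} \ge \lambda_j + 1$ while $t_{j+1} \ge t_j$, so $\lambda_{j+1} + t_{j+1} \ge \lambda_j + t_j + 1$; combined with both sums being even this forces a gap of at least $2$, hence distinctness is preserved — good. The genuinely delicate point, exactly as in the proof of Theorem~\ref{thmpn}, is that on the inverse side $V'$ must have length $\le k$ so that the subtraction $U - V'$ makes sense after zero-padding, and that $U - V'$ lands back in $D(n)$ rather than merely being a signed object; this is where the constraint ``negative parts at most the number of positive parts'' is used, mirroring the role played by distinctness of negative parts in Theorem~\ref{thmpn}. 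A secondary check is that $f$ restricted to the already-even partitions is consistent with the ``no negative parts'' branch of $f^{-1}$, so the two pieces glue into a bijection; this is routine once the parity bookkeeping is set up as above. I would close by tracking a worked example, e.g.\ $\lambda = (1,2,5,6) \in D[14]$, through the map to illustrate.
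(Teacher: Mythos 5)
Your bijective half is, in substance, exactly the paper's proof: the paper defines a map $f1$ on $D[n]$ by taking $A\equiv\lambda\pmod 2$ (i.e.\ the correction sequence $B_k=(0,0,\ldots)$, since the target parities are all even), applying Lemma~\ref{lembit} to get $t(A)$, and letting the image have positive parts $\lambda+t(A)$ and negative part sizes $t(A)'$, with inverse $U-V'$ after zero-padding. Your checks (weight preservation via $|t(A)|=|t(A)'|$, strict monotonicity from $t_{j+1}\ge t_j$ plus evenness, negative part sizes distinct and at most $k$ from $t_j\le j$ and gap-freeness) are the same ones the paper relies on, and this half already proves the theorem.

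The analytic half of your plan contains a genuine misstep, not merely an omitted computation. You propose to read the factor $1/(q;q)_n$ as the generating function of the negative parts (parts $\le n$, repetition allowed). That interpretation cannot be repaired: negative parts carry negative exponents, so their factor would have to be a series in $q^{-1}$, and, more seriously, if repeated negative parts of size at most $\ell^+$ were permitted the count would be infinite (one even positive part $2m$ together with $2m-n$ copies of $-1$ is admissible for every large $m$). The negative parts must in fact be \emph{distinct} --- as in Theorem~\ref{thmpn}, and as your own bijection outputs, since $t(A)'$ has distinct parts. The paper's manipulation produces this automatically: $\frac{q^{\binom{n+1}{2}}}{(q;q)_n}=\frac{q^{1+2+\cdots+n}(-q;q)_n}{(q^2;q^2)_n}=\frac{q^{2+4+\cdots+2n}(-q^{-1};q^{-1})_n}{(q^2;q^2)_n}$, using $(-q;q)_n=q^{1+2+\cdots+n}(-q^{-1};q^{-1})_n$; here $q^{2+4+\cdots+2n}/(q^2;q^2)_n$ generates the $n$ distinct even positive parts (so the denominator $(q;q)_n$ in your middle display is also off), and $(-q^{-1};q^{-1})_n$ generates the distinct negative parts of size at most $n$. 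In short, your ``sign bookkeeping'' placeholder is precisely the forced factor $(-q^{-1};q^{-1})_n$, obtained by splitting $1/(q;q)_n$ as $(-q;q)_n/(q^2;q^2)_n$, not a reinterpretation of $1/(q;q)_n$ itself.
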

\begin{proof}
   \begin{multline}
       \sum_{n=0}^\infty D(n)q^n=\sum_{n=0}^\infty \frac{ q^{n(n+1)/2} }{(q;q)_{n}}=\sum_{n=0}^\infty \frac{ q^{1+2+\cdots+n} q^{1+2+\cdots+n} (-q^{-1};q^{-1})_n }{(q^2;q^2)_{n}} \\ =\sum_{n=0}^\infty \frac{ q^{2+4+\cdots+(2n)} (-q^{-1};q^{-1})_n }{(q^2;q^2)_{n}}=\sum_{n=0}^\infty D_{-1}(n)q^n
   \end{multline}
\end{proof}

\begin{proof}[Bijective Proof of Theorem \ref{SD}] The proof is essentially the same as that of Theorem \ref{thmpn}. But we give sufficient details below.

Let $D[n]$ and $D_{-1}[n]$ denote the corresponding sets of ordinary and signed partitions of $n$ respectively.
Consider the map $f1: D[n]\rightarrow D_{-1}[n]$.
Let $\lambda\in D[n]$ where $\lambda = (\lambda_1,\lambda_2,\ldots,\lambda_k)$ with $1\leq \lambda_1<\lambda_2<\cdots <\lambda_k$.
Then
\begin{equation}\label{eqmap}
f1(\lambda)=\begin{cases} \lambda & \text{if $\lambda_i$ is even}\ \forall\, i\\
\gamma & \text{otherwise},
\end{cases}
\end{equation}
where $\gamma$ is a signed partition of $n$ obtained as follows. 
\begin{itemize}
\item[(i)] Set $A :=\lambda$ (mod 2);
\item[(ii)] Use Lemma \ref{lembit} to obtain $t(A)$ and denote its conjugate by $t(A)'$;
\item[(iii)] Set $\gamma$ to be the signed partition whose positive parts consist of $\lambda+t(A)$ and whose negative part sizes consist of $t(A)'$.  
\end{itemize}

Conversely, let $\gamma\in D[n]$. Then $f1^{-1}(\gamma)=f^{-1}(\gamma)$ (see the proof of Theorem \ref{thmpn}). 

Observe that the only difference between this map and $f$ is the omission of $B_k = (0,1,0,1,\ldots)$ in step (i). However, the techniques are identical since $f1$ here implicitly uses $B_k=(0,0,0,\ldots)$ because the positive parts here are all even.

Note that if $\lambda_i = \lambda_{i+1}$ for some $i$, then it is immediate that $\gamma_i = \gamma_{i+1}$, a contradiction. The converse also holds.
\vskip 3pt

For example, consider $\lambda=(1, 2, 4, 5, 13, 14)\in D[39]$. Then $A\equiv (1,0,0,1,1,0)$ (mod 2). So $t(A)=(1,2,2,3,3,4)$, $t(A)'=(1,3,5,6)$ and $\lambda+t(A) = (2,4,6,8,16,18)$. Hence $f1(\lambda)=(2,4,6,8,16,18,-1,-3,-5,-6)\in D_{-1}[39]$.
\end{proof}

\section{The Rogers--Ramanujan identities}\label{rogramid}
The Rogers--Ramanujan identities, in their analytic form, are due to L. J. Rogers in~\cite{R94}:
\begin{equation} \label{RR1an}
\sum_{n=0}^\infty \frac{q^{n^2}}{(q;q)_n} = \prod_{n=0}^\infty \frac{1}{(1-q^{5n+1})(1-q^{5n+4})}.
\end{equation}
MacMahon~\cite{M18} and Schur~\cite{S17} independently realized that the left hand side of
\eqref{RR1an} was the generating function for $RR_1(n)$, the number of partitions of $n$ in which the difference between parts is at least $2$.  We provide an alternative interpretation in terms
of signed partitions.

\begin{theorem}\label{SRR_1}
Let $RR_{-1}(n)$ denote the number of signed partitions of $n$ in which positive parts differ by at 
least by $3$ and alternate in parity with the smallest part even, and negative parts are at most the 
number of positive parts. Then 
   \begin{equation}
       RR_1(n)=RR_{-1}(n)
   \end{equation} 
\end{theorem}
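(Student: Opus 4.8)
The plan is to follow the two-pronged template already established for Theorems~\ref{thmpn} and~\ref{SD}: first an analytic proof that massages the Rogers--Ramanujan series into a generating function which visibly enumerates the signed partitions in the statement, and then a bijective proof reusing the very map $f$ built in the proof of Theorem~\ref{thmpn}, now restricted to the Rogers--Ramanujan partitions. As before, $(q;q)_n(-q;q)_n=(q^2;q^2)_n$ is the engine that converts ordinary parts into even positive parts together with (distinct) negative parts.

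For the analytic proof I would begin with $\sum_{n\ge 0} RR_1(n)q^n=\sum_{n\ge 0} q^{n^2}/(q;q)_n$, replace $1/(q;q)_n$ by $(-q;q)_n/(q^2;q^2)_n$, and redistribute the exponent $n^2=1+3+\cdots+(2n-1)$ over the factors of $(-q;q)_n=\prod_{j=1}^n(1+q^j)$:
$$q^{n^2}(-q;q)_n=\prod_{j=1}^n q^{2j-1}(1+q^j)=\prod_{j=1}^n q^{3j-1}(1+q^{-j})=q^{n(3n+1)/2}\,(-q^{-1};q^{-1})_n .$$
This turns the left side into $\sum_{n\ge 0} \frac{q^{n(3n+1)/2}(-q^{-1};q^{-1})_n}{(q^2;q^2)_n}$, whose $n$th summand I would read combinatorially: $q^{n(3n+1)/2}=q^{2+5+8+\cdots+(3n-1)}$ supplies a staircase of $n$ positive parts $2,5,8,\dots,3n-1$, which differ by exactly $3$, alternate in parity, and have smallest part $2$; $1/(q^2;q^2)_n$ superimposes on the staircase a partition into at most $n$ even parts, which preserves the parity pattern and can only enlarge the gaps; and $(-q^{-1};q^{-1})_n=\prod_{j=1}^n(1+q^{-j})$ selects a set of distinct negative parts from $\{1,\dots,n\}$, each at most the number $n$ of positive parts. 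The remaining task is to check that this is a genuine bijection onto the signed partitions counted by $RR_{-1}(n)$; the inverse is forced, because in such a signed partition the $i$th positive part $\pi_i$ satisfies $\pi_i\ge 3i-1$ with $\pi_i\equiv 3i-1\pmod{2}$, so the numbers $\pi_i-(3i-1)$ are nonnegative even integers, nondecreasing in $i$ since $\pi_{i+1}-\pi_i\ge 3$, which is exactly the data of a partition into at most $n$ even parts, while the negative parts are read off directly.

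For the bijective proof I would let $f$ be exactly the map of Theorem~\ref{thmpn}, restricted to $RR_1[n]$, the partitions of $n$ with parts differing by at least $2$. From Theorem~\ref{thmpn} I already have that $f$ is a bijection $p[n]\to p_{-1}[n]$ preserving size and producing the required parity/distinctness/``$\le k$'' conditions, so it suffices to verify (a) $f(RR_1[n])\subseteq RR_{-1}[n]$ and (b) $f^{-1}(RR_{-1}[n])\subseteq RR_1[n]$; together these force $f$ to restrict to a bijection between the two sets. For (a) the only new point is the gap-$3$ condition on the positive parts. If $\lambda$ lies in the first branch of $f$, then consecutive parts have opposite parity, so their difference, being at least $2$ and odd, is at least $3$. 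If $\lambda$ lies in the second branch, the positive parts of $\gamma$ are $\lambda_i+t_i$ with $t=t(A)$ nondecreasing and each step $t_{i+1}-t_i\in\{0,1\}$; since $B_k=(0,1,0,1,\dots)$ alternates at every index, $A_{i+1}\ne A_i$ exactly when $\lambda_{i+1}-\lambda_i$ is even, so a gap $\lambda_{i+1}-\lambda_i=2$ forces $t_{i+1}-t_i=1$ and hence $\gamma_{i+1}-\gamma_i\ge 3$, while a gap $\lambda_{i+1}-\lambda_i\ge 3$ already gives $\gamma_{i+1}-\gamma_i\ge 3$. For (b), write $\gamma\in RR_{-1}[n]$ with positive parts $u_1<\cdots<u_k$ and let $(v_1'\le\cdots\le v_k')$ be the conjugate of the negative-part partition, padded with leading $0$'s; since the negative parts are distinct, this conjugate is gap-free and has consecutive differences at most $1$, so $(u_{i+1}-v_{i+1}')-(u_i-v_i')=(u_{i+1}-u_i)-(v_{i+1}'-v_i')\ge 3-1=2$, and $u_i\ge 3i-1>i\ge v_i'$ makes every part of $f^{-1}(\gamma)$ positive; thus $f^{-1}(\gamma)\in RR_1[n]$.

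The main obstacle is precisely the upgrade from ``gap $\ge 2$'' to ``gap $\ge 3$'': everything in Theorem~\ref{thmpn} manufactures or consumes gaps of size at least $2$, and the missing unit of separation must be squeezed out of the parity bookkeeping. The key realization, used in both directions, is the one above --- a gap of exactly $2$ among the $\lambda_i$ is incompatible with its parity staying fixed while $B_k$ alternates, so $t(A)$ is forced to step up there; and, dually, the conjugate of a distinct-parts partition never jumps by more than $1$, so subtracting it from a gap-$3$ sequence cannot pull a gap below $2$. Once this single point is isolated, both proofs reduce to routine adaptations of the arguments already carried out for Theorem~\ref{thmpn}.
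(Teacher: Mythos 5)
Your proposal is correct and follows essentially the same route as the paper: the identical rewriting $q^{n^2}(-q;q)_n/(q^2;q^2)_n = q^{2+5+\cdots+(3n-1)}(-q^{-1};q^{-1})_n/(q^2;q^2)_n$ for the generating-function proof, and the same restriction of the bijection $f$ of Theorem~\ref{thmpn} with a gap-$3$ verification (which the paper phrases in contrapositive form). You merely spell out the combinatorial reading and the checks in more detail, including the implicit distinctness of the negative parts, which the paper's argument also uses.
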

\begin{proof}[Generating function proof]
   \begin{multline}
       \sum_{n=0}^\infty RR_1(n)q^n=\sum_{n=0}^\infty \frac{ q^{n^2} }{(q;q)_{n}}=\sum_{n=0}^\infty \frac{ q^{1+3+\cdots+(2n-1)} q^{1+2+\cdots+n} (-q^{-1};q^{-1})_n }{(q^2;q^2)_{n}} \\ =\sum_{n=0}^\infty \frac{ q^{2+5+\cdots+(3n-1)}  (-q^{-1};q^{-1})_n }{(q^2;q^2)_{n}}=\sum_{n=0}^\infty RR_{-1}(n)q^n
   \end{multline}
\end{proof}

\begin{proof}[Bijective Proof of Theorem \ref{SRR_1}] 
The proof is exactly the same as that of Theorem \ref{thmpn}. 
Let $RR_1[n]$ and $RR_{-1}[n]$ denote the corresponding sets of ordinary and signed partitions respectively. Then $f: RR_1[n]\rightarrow RR_{-1}[n]$ gives the required bijection.

If $\lambda_i-\lambda_{i-1}=1$ for some $i$, it may be verified that $\gamma_i-\gamma_{i-1}=1$, a contradiction.
Conversely if the latter holds, then $\lambda_i-\lambda_{i-1} = (\gamma_i-t_i)-(\gamma_{i-1}-t_{i-1})=1-(t_i-t_{i-1})\leq 1$.

\end{proof}

\begin{example}
Consider $\lambda=(1, 4, 6, 11, 14, 16)\in RR_1[52]$. Then $A\equiv (1,0,0,1,0,0) +$ 

$(0,1,0,1,0,1)\equiv (1,1,0,0,0,1)$ (mod 2). So $t(A)=(1,1,2,2.2,3)$, $t(A)'=(1,4,6)$ and $\lambda+t(A) = (2,5,8,11,16,19)$. Hence $f(\lambda)=(2,5,8,11,16,19,-1,-4,-6)\in RR_{-1}[52]$.
\end{example}
 
\begin{theorem}\label{SRR_2}
Let $RR_2(n)$ denote the number of partitions of $n$ into parts $>1$ and difference between parts is at least $2$ and let $RR_{-2}(n)$ be the number of signed partitions of $n$ in which positive parts differ by at least $3$ and alternate in parity and have no 1's  with the smallest part odd, and negative parts are at most the number of positive parts. Then 
\begin{equation}
  RR_2(n)=RR_{-2}(n)
\end{equation} 
\end{theorem}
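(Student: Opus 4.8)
The plan is to prove Theorem~\ref{SRR_2} exactly as Theorems~\ref{thmpn}, \ref{SD} and especially \ref{SRR_1} were proved: an analytic argument manipulating the sum side of the second Rogers--Ramanujan identity, and a bijective argument obtained by restricting a mild variant of the map $f$ of Theorem~\ref{thmpn}.

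For the generating function proof, I would start from the classical fact (MacMahon, Schur, as for the first identity) that $\sum_{n\ge 0} RR_2(n)q^n = \sum_{n\ge 0} q^{n^2+n}/(q;q)_n$, the sum side of the second Rogers--Ramanujan identity. Then I would apply the same rewriting of $1/(q;q)_n$ used in the earlier proofs, namely $1/(q;q)_n = q^{1+2+\cdots+n}(-q^{-1};q^{-1})_n/(q^2;q^2)_n$, which follows from $(q;q)_n(-q;q)_n=(q^2;q^2)_n$ after pulling a factor $q^j$ out of each $1+q^j$. The exponents combine as $n^2+n+(1+2+\cdots+n)=\tfrac32 n(n+1)=3+6+\cdots+3n$, so
\begin{equation*}
\sum_{n\ge 0} RR_2(n)q^n \;=\; \sum_{n\ge 0}\frac{q^{3+6+\cdots+3n}\,(-q^{-1};q^{-1})_n}{(q^2;q^2)_n}.
\end{equation*}
It then remains to read off the right-hand side: the staircase $3,6,9,\dots,3n$ has $n$ parts, consecutive differences $3$, smallest part $3$ (odd, hence no $1$'s), and alternating parity; the factor $1/(q^2;q^2)_n$ superimposes a partition into at most $n$ even parts, which preserves the parity pattern and keeps all gaps $\ge 3$; and $(-q^{-1};q^{-1})_n$ records a set of distinct negative parts drawn from $\{1,2,\dots,n\}$, i.e.\ at most the number of positive parts. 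This is exactly the generating function for $RR_{-2}(n)$.

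For the bijective proof I would reuse the map $f$ of Theorem~\ref{thmpn} essentially verbatim, with the single change that in step (i) the alternating binary seed is taken to be $B_k=(1,0,1,0,\dots)$ rather than $(0,1,0,1,\dots)$; this change is forced by the requirement that the target signed partitions have \emph{odd} smallest positive part. Given $\lambda=(\lambda_1<\cdots<\lambda_k)\in RR_2[n]$ (so $\lambda_1\ge 2$ and $\lambda_{i+1}-\lambda_i\ge 2$), set $f(\lambda)=\lambda$ when $\lambda_i\equiv i\pmod 2$ for all $i$, and otherwise form $A=(\lambda+B_k)\bmod 2$, apply Lemma~\ref{lembit} to obtain $t(A)$ and its conjugate $t(A)'$, and let $\gamma$ have positive parts $\lambda+t(A)$ and negative part sizes $t(A)'$; the inverse is $f^{-1}(\gamma)=U-V'$ exactly as in Theorem~\ref{thmpn}. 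The routine checks carry over: $\gamma_j\equiv\lambda_j+t_j\equiv B_j\pmod 2$, so the positive parts alternate in parity with smallest part odd; combined with $\lambda_1\ge 2$ this forces the smallest positive part to be $\ge 3$, so there are no $1$'s; $|\gamma|=|\lambda|$ since conjugation preserves size; and $t(A)'$ has distinct parts of size at most $k$.

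The one point needing genuine (if short) argument — the analogue of the verification in the bijective proof of Theorem~\ref{SRR_1} — is the gap condition, and I expect it to be the only real obstacle. The argument will be: if $\lambda_i-\lambda_{i-1}=2$ then $\lambda_i\equiv\lambda_{i-1}\pmod 2$ while $B_i\ne B_{i-1}$, so $A_i\ne A_{i-1}$ and hence $t_i=t_{i-1}+1$ by \eqref{eqbit}, giving $\gamma_i-\gamma_{i-1}=3$; and if $\lambda_i-\lambda_{i-1}\ge 3$ then $\gamma_i-\gamma_{i-1}\ge 3$ outright since $t$ is nondecreasing. Conversely, any $\gamma\in RR_{-2}[n]$ has $\gamma_i-\gamma_{i-1}\ge 3$, so (using $t_i-t_{i-1}\le 1$) the preimage satisfies $\lambda_i-\lambda_{i-1}=(\gamma_i-\gamma_{i-1})-(t_i-t_{i-1})\ge 2$ and $\lambda_1=\gamma_1-t_1\ge 3-1=2$, whence $f^{-1}(\gamma)\in RR_2[n]$. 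Beyond this gap bookkeeping — and a small check that leading zeros in $t(A)$, which now occur precisely when $\lambda_1$ is odd, cause no trouble — everything is inherited from the proof of Theorem~\ref{thmpn}.
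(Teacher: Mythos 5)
Your proposal matches the paper's proof essentially verbatim: the same rewriting of $q^{n^2+n}/(q;q)_n$ as $q^{3+6+\cdots+3n}(-q^{-1};q^{-1})_n/(q^2;q^2)_n$, and the same bijection $f$ from Theorem~\ref{thmpn} modified only by taking $B_k=(1,0,1,0,\ldots)$, which is exactly the change the paper makes. Your gap and ``no 1's'' verifications are just a more detailed write-up of the checks the paper states briefly, so there is nothing substantively different to report.
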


\begin{proof}[Generating function proof]
   \begin{multline}
       \sum_{n=0}^\infty RR_2(n)q^n=\sum_{n=0}^\infty \frac{ q^{n^2+n} }{(q;q)_{n}}=\sum_{n=0}^\infty \frac{ q^{2+4+\cdots+2n} q^{1+2+\cdots+n} (-q^{-1};q^{-1})_n }{(q^2;q^2)_{n}} \\=\sum_{n=0}^\infty \frac{ q^{3+6+\cdots+3n} (-q^{-1};q^{-1})_n }{(q^2;q^2)_{n}}\sum_{n=0}^\infty RR_{-2}(n)q^n
   \end{multline}
\end{proof}

\begin{proof}[Bijective Proof of Theorem \ref{SRR_2}] 
The proof is almost identical with that of Theorem \ref{SRR_1}. The only change required is to define $B_k$ as $B_k=(1,0,1,0,\ldots)$. 

Note that $1\in \lambda$ if and only if $1\in\gamma$, and if   $\lambda_i-\lambda_{i-1}<2$ for some $i$, then it may be verified that $t_i=t_{i-1}$; thus $\lambda_i+t_i-\lambda_{i-1}-t_{i-1} = \gamma_i-\gamma_{i-1}<2$, a contradiction.
\end{proof}

\section{The G\"ollnitz--Gordon identities}\label{gollnitz}
The G\"ollnitz--Gordon identities were discovered independently by H. G\"ollnitz~\cite{G67} and
B. Gordon~\cite{G65}.  The first G\"ollnitz--Gordon identity states that the number of partitions
of $n$ such that all parts differ by at least two and no consecutive even numbers appear as parts equals the number of partitions into parts congruent to $1$, $4$, or $7$ modulo $8$.

In~\cite{Andrews2007}, Andrews finds a class of signed partitions that are equinumerous with
partitions counted in the first G\"ollnitz--Gordon identity:

\begin{theorem}[Andrews]\label{AFGG}
Let $GG_1(n)$ denote the number of partitions of $n$ in which differences between parts are at least $2$ and at least $4$ if the parts are even. Let $GG_{-1}(n)$ denote the number of signed partitions of $n$ where the positive parts are even and at least twice the number of positive parts, the negative parts are odd, distinct and at most twice the number of positive parts. Then
\begin{equation} \label{An1}
GG_{-1}(n)=GG_1(n).
\end{equation}
\end{theorem}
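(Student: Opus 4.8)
The plan is to prove \eqref{An1} by generating functions, reusing the device already exploited for Theorems~\ref{thmpn}--\ref{SRR_2}; the only new wrinkle is that the series in play carries a factor $(-q;q^2)_n$, coming from the ``odd and distinct'' half of a G\"ollnitz--Gordon partition, rather than the $(-q;q)_n$ seen before. Recall that the first G\"ollnitz--Gordon identity, in analytic form, exhibits the generating function for $GG_1(n)$ as
\[
\sum_{n=0}^{\infty} GG_1(n)\, q^n \;=\; \sum_{n=0}^{\infty} \frac{q^{n^2}\,(-q;q^2)_n}{(q^2;q^2)_n};
\]
this sum-side reading is classical (cf.~\cite{Andrews1976}), and I would take it as given. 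It then suffices to show that the right-hand series also equals $\sum_{n\ge 0} GG_{-1}(n)\, q^n$.

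To that end I would build the generating function for $GG_{-1}(n)$ by conditioning on the number $n$ of positive parts. With $n$ positive parts fixed, those parts form a partition into exactly $n$ even parts, each at least $2n$; removing $2n$ from every part leaves an arbitrary partition into at most $n$ even parts, so this block contributes $q^{2n^2}/(q^2;q^2)_n$. The negative-part sizes form an arbitrary subset of $\{1,3,5,\ldots,2n-1\}$, and a negative part of size $2i-1$ contributes $q^{-(2i-1)}$ to the signed size of the partition, so this block contributes $\prod_{i=1}^{n}(1+q^{-(2i-1)}) = (-q^{-1};q^{-2})_n$. Summing over $n$ gives
\[
\sum_{n=0}^{\infty} GG_{-1}(n)\, q^n \;=\; \sum_{n=0}^{\infty} \frac{q^{2n^2}\,(-q^{-1};q^{-2})_n}{(q^2;q^2)_n}.
\]

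The last step is the same ``reverse each finite factor'' manipulation used throughout the paper: since $1+q^{-(2i-1)} = q^{-(2i-1)}(1+q^{2i-1})$ and $\sum_{i=1}^{n}(2i-1) = n^2$, we obtain $(-q^{-1};q^{-2})_n = q^{-n^2}(-q;q^2)_n$, hence $q^{2n^2}(-q^{-1};q^{-2})_n = q^{n^2}(-q;q^2)_n$, and the two displayed series agree term by term. This proves \eqref{An1}.

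Since the computation itself is routine, the substantive content is really the classical input --- that $\sum_{n\ge 0} q^{n^2}(-q;q^2)_n/(q^2;q^2)_n$ enumerates $GG_1(n)$ --- together with correctly reading off the two building-block generating functions for $GG_{-1}(n)$; no step presents a serious obstacle. If instead one wanted a bijective proof in the spirit of the map $f$ of Theorem~\ref{thmpn}, that is where the difficulty would lie: the factor $(-q;q^2)_n$ encodes an independent choice of which positive parts sit at even values, so the binary-sequence recoding supplied by Lemma~\ref{lembit} would have to be layered on top of that choice rather than standing in for it, and one would also have to verify directly that the G\"ollnitz--Gordon gap conditions are preserved by the resulting correspondence.
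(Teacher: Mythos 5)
Your generating-function argument is correct: with $k$ positive parts the positive block contributes $q^{2k^2}/(q^2;q^2)_k$ and the negative block $\prod_{i=1}^{k}\bigl(1+q^{-(2i-1)}\bigr)=(-q^{-1};q^{-2})_k$, and since $(-q^{-1};q^{-2})_k=q^{-k^2}(-q;q^2)_k$ each summand collapses to $q^{k^2}(-q;q^2)_k/(q^2;q^2)_k$, the classical sum side of the first G\"ollnitz--Gordon identity, which the paper itself invokes without proof (e.g.\ in Theorem~\ref{SFGG}); note also that the minimal weight with $k$ positive parts is $k^2$, so each term is an honest power series. However, this is a genuinely different route from what the paper presents for Theorem~\ref{AFGG}: the statement is quoted from Andrews, and in place of an analytic proof the paper records the explicit bijection $h$ of Sills between $GG_1[n]$ and $GG_{-1}[n]$, given by appending the negative parts $-\wp(\gamma_i)(2i-1)$ and transforming each part via $\pi_k=\gamma_k+4k-2j-2+\wp(\gamma_k)+2\sum_{i=k+1}^{j}\wp(\gamma_i)$, together with its inverse. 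Your computation is essentially the same manipulation the paper performs analytically for its \emph{other} G\"ollnitz--Gordon results (Theorems~\ref{SFGG} and~\ref{SGG}), so it buys a short, self-contained verification resting only on the classical sum-side interpretation; what it does not provide is the explicit correspondence, which is precisely what the paper needs later, since that bijection is reused for $GG_2[n]\rightarrow GG_{-2}[n]$ in Theorem~\ref{SGG} and, with a modified parity function, in Theorem~\ref{SLG}. Your closing remark about the difficulty of a direct bijective proof is therefore a bit pessimistic: the known bijection does not layer Lemma~\ref{lembit} on top of the $(-q;q^2)_n$ choice at all, but instead shifts parts by an amount determined by the parities of the original parts. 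A minor stylistic point: you use $n$ both for the weight and for the number of positive parts; a second letter would avoid ambiguity.
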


Let $GG_1[n]$ and $GG_{-1}[n]$ denote the sets of objects enumerated by $GG_1(n)$ and $GG_{-1}(n)$, respectively.  The third author~\cite{Sills2008} provided a bijective map $h:GG_{-1}[n]\rightarrow GG_1[n]$. Let $(\gamma_1,\gamma_2,\cdots,\gamma_j)\in GG_1[n]$ and $(\pi_1,\pi_2,\cdots,\pi_j)\in GG_{-1}[n]$ and define $h$ by \begin{equation} \label{eqsills1}
h:\, (\gamma_1,\gamma_2,\cdots,\gamma_j)\rightarrow (\pi_1,\pi_2,\cdots,\pi_j,-\wp(\gamma_1)\times1,-\wp(\gamma_2)\times3,\cdots,-\wp(\gamma_j)\times(2j-1)),
\end{equation}
where $\wp(k)$ is defined as 
 \[ \wp(k)  = \begin{cases}
  1 & \mbox{ if $ \text{ $k$ is odd} $}\\
  0 & \mbox{ if \text{ $k$ is even}},
  \end{cases}
  \]
and for each $k$ we have $$\pi_k=\gamma_k+4k-2j-2+\wp(\gamma_k)+2\sum^j_{i=k+1}\wp(\gamma_i).$$

The inverse map $h^{-1}$ is then defined as follows:\\
Let $\pi=(\pi_1,\pi_2,\cdots, \pi_r,-f_1\cdot 1,-f_2\cdot 3,\cdots , -f_r\cdot (2r-1))\in GG_{-1}[n]$ such that 
\begin{equation}\label{eqchar1}
f_j  = \begin{cases}
  1 & \mbox{ if $-(2j-1)\in \pi$}\\
  0 & \mbox{ if $-(2j-1)\notin \pi$}.
  \end{cases}
\end{equation}
Then $\pi$ has the form
\begin{equation} \label{eqsills2}
\pi=(\pi_1,\pi_2,\cdots, \pi_r,-f_1\cdot 1,-f_2\cdot 3,\cdots , -f_r\cdot (2r-1))\rightarrow (\gamma_1,\gamma_2,\cdots, \gamma_r).
\end{equation}
Therefore,
$$\gamma_j=\pi_j-4j+2r+2-f_j-2\sum_{i=j+1}^rf_i.$$

\begin{example} Let $\lambda=(20,17,15,12,9,7,4,1)\in GG_1[85]$. Then using \eqref{eqsills1} we obtain $\pi_1=20+4(1)-2(8)-2+0+2(5)=16$, $\pi_2=17+4(2)-16-2+1+2(4)=16$, \ldots, $\pi_8=1+4(8)-16-2+1+2(0)=16$. Thus the image is $(16^8,0(-1),1(-3),1(-5),0(-7),1(-9),1(-11),0(-13),1(-15))=(16^8,-3,-5,-9,-11,-15)\in GG_{-1}[85].$\\
Conversely, applying $h^{-1}$ to $(16^8,-3,-5,-9,-11,-15)$ we obtain $\gamma_1=16-4(1)+2(8)+2-0-2(5)=20$, $\gamma_2=16-4(2)+2(8)+2-1-2(4)=17$, \ldots, $\gamma_8=16-4(8)+2(8)+2-1-2(0)=1$. Thus the pre-image is $(20,17,15,12,9,7,4,1)\in GG_1[85]$.    
\end{example}

Define $\ell^+$ to be the number of positive parts in a signed partition. We state  another interpretation of $GG_1(n)$ below.
\begin{theorem}\label{SFGG} 
    Let $GG'_{-1}(n)$ denote the number of signed partitions of $n$ in which positive parts are even and differ by at least $4$, and negative parts are odd, distinct and at most $2\ell^+-1$. Then 
\[GG'_{-1}(n)=GG_1(n).\]
\end{theorem}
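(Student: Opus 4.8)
The plan is to follow the same two-pronged strategy used for the earlier theorems, giving first a generating-function proof and then a bijection. For the analytic proof, I would start from the Rogers--Ramanujan--Slater-type series for $GG_1(n)$, namely $\sum_{n\ge0} GG_1(n)q^n = \sum_{n\ge0} \frac{q^{n^2}(-q;q^2)_n}{(q^2;q^2)_n}$, which is the standard Andrews--Gordon-style sum side for the first G\"ollnitz--Gordon identity. The goal is to manipulate the summand into a form whose exponent of $q$ records: an even positive part of multiplicity-type ``differ by at least $4$, smallest part at least $2$'' (contributing $q^{2+6+10+\cdots}$ after the usual triangularization $q^{n^2}=q^{1+3+\cdots+(2n-1)}$ paired with a second copy of $q^{1+2+\cdots+n}$), plus a factor $(-q^{-1};q^{-1})_n$ that encodes a set of distinct odd negative parts bounded by $2n-1$, all over the base $(q^2;q^2)_n$. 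Concretely I expect the chain $\frac{q^{n^2}(-q;q^2)_n}{(q^2;q^2)_n} = \frac{q^{1+3+\cdots+(2n-1)}\,q^{1+2+\cdots+n}\,(-q^{-1};q^{-1})_n}{(q^2;q^2)_n}\cdot(\text{correction})$, where the point is that $(-q;q^2)_n$ and $q^{1+2+\cdots+n}(-q^{-1};q^{-1})_n$ both expand as $\sum$ over subsets of $\{1,3,\dots,2n-1\}$ up to an overall power of $q$; reconciling those two encodings of the odd numbers (one as positive contributions inside $(-q;q^2)_n$, one as negative contributions inside $q^{\binom{n+1}{2}}(-q^{-1};q^{-1})_n$) is what produces the ``negative odd parts at most $2\ell^+-1$'' condition. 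I would present this as a single displayed multi-line computation exactly in the style of the proofs of Theorems~\ref{SRR_1} and \ref{SRR_2}.

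For the bijective proof I would compose the known bijection $h$ of Theorem~\ref{AFGG} (from \cite{Sills2008}) with a suitable re-encoding of the negative parts. Observe that $GG_{-1}[n]$ (Andrews' class) and $GG'_{-1}[n]$ differ only in how the bookkeeping constraint on positive parts and negative parts is split: in $GG_{-1}[n]$ the positive even parts must be at least twice their count \emph{and} differ by at least $2$ (inherited from the even-difference condition), with negative odd parts distinct and $\le 2j-1$; in $GG'_{-1}[n]$ the positive parts merely differ by at least $4$ with no lower-bound tie to $\ell^+$, while the negative parts are distinct odd and $\le 2\ell^+-1$. So I would define a map $g:GG_{-1}[n]\to GG'_{-1}[n]$ that, given $(\pi_1,\dots,\pi_j,-\text{(odd set)})\in GG_{-1}[n]$, subtracts from each $\pi_k$ the ``baseline'' $2(j-k+1)$-type quantity that was forced by the $\ge 2j$ condition, thereby converting the difference-$\ge2$/bounded-below data into a plain difference-$\ge4$ sequence, and simultaneously adjusts the negative-part bound from $2j-1$ to $2\ell^+-1$ (these coincide here since $\ell^+ = j$). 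One checks that the total weight is preserved because the weight removed from the positive parts, $\sum_k 2(j-k+1) = j(j+1)$-ish, is exactly compensated; I would instead phrase the map so it is manifestly weight-preserving by pairing each unit removed from a positive part with a unit removed elsewhere, or simply verify $|g(\pi)| = |\pi|$ directly. Then $h\circ g^{-1}$ — or more cleanly a direct bijection $GG'_{-1}[n]\to GG_1[n]$ built by the same template as $h$ in \eqref{eqsills1}–\eqref{eqsills2} with $4k-2j-2$ replaced by the appropriate linear form — gives the result. I would include a worked numerical example paralleling the one after Theorem~\ref{AFGG}.

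The main obstacle I anticipate is the analytic reconciliation step: showing that $(-q;q^2)_n$, which contributes \emph{positive} odd parts, can be traded for $(-q^{-1};q^{-1})_n$ up to a monomial so that the odd parts end up \emph{negative}. This is not a formal triviality — it requires recognizing $(-q;q^2)_n = q^{n^2}(-q^{-1};q^{-2})_n$ (since $\prod_{i=1}^n(1+q^{2i-1}) = q^{1+3+\cdots+(2n-1)}\prod_{i=1}^n(1+q^{-(2i-1)}) = q^{n^2}\prod_{i=1}^n(1+q^{-(2i-1)})$), and then folding the extra $q^{n^2}$ back into the triangular exponents; getting the arithmetic of the three interacting powers of $q$ (the two triangular/square progressions and this extra $q^{n^2}$) to collapse to exactly $q^{2+6+10+\cdots+(4n-2)}$, i.e.\ $q^{2n^2}$, is the computation that must be done carefully. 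For the bijective proof the analogous subtlety is verifying weight-preservation of $g$ and that the image genuinely satisfies the ``$\le 2\ell^+-1$'' bound rather than the a priori weaker or stronger bound; but since $g$ acts trivially on $\ell^+$ and on the negative parts, this should reduce to a short check. I expect the rest to be routine given Lemma~\ref{lembit}, Theorem~\ref{AFGG}, and the template already established in Sections~\ref{introd}–\ref{rogramid}.
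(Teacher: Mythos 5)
Your generating-function half is, in substance, the paper's proof: the decisive step is exactly the identity you state in your ``obstacle'' paragraph, $(-q;q^2)_n = q^{n^2}(-q^{-1};q^{-2})_n$, after which $q^{n^2}\cdot q^{n^2}=q^{2n^2}=q^{2+6+\cdots+(4n-2)}$ encodes the even positive parts differing by at least $4$, the factor $(1+q^{-1})(1+q^{-3})\cdots(1+q^{-(2n-1)})$ encodes the distinct odd negative parts bounded by $2\ell^+-1$, and the denominator $(q^2;q^2)_n$ does the rest; this is precisely the chain displayed in the paper, and it alone proves the theorem. Be aware, though, that your first-pass template --- extracting $q^{1+2+\cdots+n}(-q^{-1};q^{-1})_n$ as in Theorems~\ref{SRR_1} and \ref{SRR_2} --- is the wrong one here: it would yield negative parts bounded by $\ell^+$ rather than odd negative parts bounded by $2\ell^+-1$, so it must be discarded in favour of the odd-indexed extraction you give later.

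The bijective half is where you diverge from the paper, and as sketched it has two concrete defects. First, your description of Andrews' class $GG_{-1}[n]$ is incorrect: its positive parts satisfy no difference condition at all and may repeat (see the paper's example $(16^8,-3,-5,-9,-11,-15)$); they are merely even and $\ge 2j$. Second, the adjustment you propose, subtracting $2(j-k+1)$ from $\pi_k$, is not weight-preserving ($\sum_k 2(j-k+1)=j(j+1)\neq 0$), and with the negative parts held fixed there is nowhere for the excess to go; the correct staircase is $\tau_k=\pi_k+2j+2-4k$, which adds to early parts and subtracts from late ones, has total shift $0$, and carries ``even, $\ge 2j$, possibly repeated'' to ``even, differences $\ge 4$, smallest $\ge 2$'' while leaving the bound $2\ell^+-1$ on the negative parts untouched. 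With that repair your composition through $h$ does work, but the paper avoids the detour: it defines the direct map $\lambda\mapsto(\tau_1,\ldots,\tau_r,-\wp(\lambda_1)\cdot 1,\ldots,-\wp(\lambda_r)\cdot(2r-1))$ with $\tau_j=\lambda_j+\wp(\lambda_j)+2\sum_{i=j+1}^r\wp(\lambda_i)$ --- exactly your fallback of reusing the template \eqref{eqsills1} with the linear term $4k-2j-2$ deleted --- and verifies directly that each $\tau_j$ is even and $\tau_j-\tau_{j+1}\ge 4$. So your plan is salvageable, but as written neither the stated shift nor the stated characterization of $GG_{-1}[n]$ is right, and the ``appropriate linear form'' in your fallback is left undetermined, whereas it is the whole content of the bijection.
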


\begin{proof}
    
\begin{align*}
\sum_{n=0}^\infty GG_1(n)q^n
&= 1+\sum_{n=1}^\infty \frac{q^{n^2} (-q;q^2)_n}{(q^2;q^2)_n}\\
&= 1+\sum_{n=1}^\infty \frac{q^{n^2} (1+q)(1+q^3)\cdots(1+q^{2n-1})}{(1-q^2)(1-q^4)\cdots(1-q^{2n})}\\
&= 1+\sum_{n=1}^\infty \frac{q^{n^2} q^{n^2}(\frac{1}{q}+1)(\frac{1}{q^3}+1)\cdots(\frac{1}{q^{2n-1}}+1)}{(1-q^2)(1-q^4)\cdots(1-q^{2n})}\\
&= 1+\sum_{n=1}^\infty \frac{q^{2+6+\cdots+(4n-2)}(\frac{1}{q}+1)(\frac{1}{q^3}+1)\cdots(\frac{1}{q^{2n-1}}+1)}{(1-q^2)(1-q^4)\cdots(1-q^{2n})}\\
&= \sum_{n=0}^\infty GG'_{-1}(n)q^n. 
\end{align*}
\end{proof}
 
\begin{proof}  
\emph{Bijection}.
 Let $GG'_{-1}[n]$ be the set of signed partitions of $n$ enumerated by $GG'_{-1}(n)$. We define a map $g:GG_1[n]\rightarrow GG'_{-1}[n]$: if  $\lambda=(\lambda_1,\lambda_2,\cdots, \lambda_r)\in GG_1[n]$, then 
\begin{equation}\label{eqsills3}
g:\, (\lambda_1,\lambda_2,\cdots, \lambda_r)\rightarrow (\tau_1,\tau_2,\cdots, \tau_r,-\wp(\lambda_1)\cdot 1,-\wp(\lambda_2)\cdot 3,\cdots , -\wp(\lambda_r)\cdot (2r-1)),
\end{equation}
where 
$$\tau_j=\lambda_j+\wp(\lambda_j)+2\sum_{i=j+1}^r\wp(\lambda_i),\ 1\leq j\leq r.$$

Note that $\tau_j\equiv \lambda_j+\wp(\lambda_j)\equiv 0$ (mod 2) for all $j$. Furthermore, for a fixed $j$ we have $\pi_j - \pi_{j+1} = \lambda_j - \lambda_{j+1} + \wp(\lambda_j) + \wp(\lambda_{j+1})\geq 4$ because $\lambda_j - \lambda_{j+1}\geq 2,3,4$ and $\wp(\lambda_j) + \wp(\lambda_{j+1}) = 2,1,0$ when $\lambda_j$ and $\lambda_{j+1}$ are both odd, have opposite parities or both even, respectively.

The inverse map $g^{-1}$ is obtained as follows:\\
Let $\tau=(\tau_1,\tau_2,\cdots, \tau_r,-f_1\cdot 1,-f_2\cdot 3,\cdots , -f_r\cdot (2r-1))\in GG'_{-1}[n]$, where the $f_j$ are defined for $\tau$ as in \eqref{eqchar1}. Then
\begin{equation}\label{eqsills4}
\tau=(\tau_1,\tau_2,\cdots, \tau_r,-f_1\cdot 1,-f_2\cdot 3,\cdots , -f_r\cdot (2r-1))\rightarrow (\lambda_1,\lambda_2,\cdots, \lambda_r).
\end{equation}
Thus
$$\lambda_j=\tau_j-f_j-2\sum_{i=j+1}^rf_i.$$
\end{proof}

\begin{example}
Let $\lambda=(20,17,15,12,9,7,4,1)\in GG_1[85]$. Then $\tau_1=20+0+2(5)=30, \tau_2=17+1+2(4)=26$, \ldots, $\tau_8=1+(1+2(0))=2$. So the positive parts of $\tau$ are $(30,26,22,18,14,10,6,2)$ and the negative parts are

$((-1)\wp(20),(-3)\wp(17),\ldots,(-15)\wp(1))=(0,-3,-5,0,-9,-11,0,-15)$. Thus

 $\tau=(30,26,22,18,14,10,6,2,-3,-5,-9,-11,-15).$
\end{example}

\medskip

The second G\"ollnitz--Gordon identity~\cite{G67,G65} states that the number of partitions of $n$
of the type enumerated by $GG_1(n)$ and additionally contain no parts less than $3$, equals the
number of partitions of $n$ into parts congruent to $3$, $4$, or $5$ modulo $8$.

\begin{theorem} \label{SGG}
Let $GG_2(n)$ denote the number of partitions of $n$ with parts at least 3 and and differ by at least $2$ and by at least $4$ if the parts are even.\\
Let $GG_{-2}(n)$ be the number of signed partitions of $n$ in which positive parts are even and at least $2(\ell^+ + 1)$, negative parts are odd, distinct and at most $2\ell^+ $.\\
Let $GG'_{-2}(n)$ denote the number of signed partitions of $n$ in which positive parts are $\geq4$, even and differ by at least $4$, and negative parts are odd, distinct and at most $2\ell^+ - 1$. Then
\begin{equation} \label{GG2}
GG_{2}(n)=GG_{-2}(n)=GG'_{-2}(n).
\end{equation}
\end{theorem}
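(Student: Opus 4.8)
The plan is to treat Theorem~\ref{SGG} in the same two-pronged way that Theorems~\ref{AFGG} and~\ref{SFGG} treated the first G\"ollnitz--Gordon identity: a single generating-function computation will establish both equalities simultaneously, and the two bijections will be obtained by simply restricting the maps $h$ of~\eqref{eqsills1} and $g$ of~\eqref{eqsills3} that are already in hand.

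For the analytic part I would begin from the sum side of the second G\"ollnitz--Gordon identity, $\sum_{n=0}^\infty GG_2(n)q^n = 1+\sum_{n=1}^\infty q^{n^2+2n}(-q;q^2)_n/(q^2;q^2)_n$, and repeat the rewriting from the proof of Theorem~\ref{SFGG}: since $1+q^{2j-1}=q^{2j-1}(1+q^{-(2j-1)})$ and $1+3+\cdots+(2n-1)=n^2$, one gets $q^{n^2}(-q;q^2)_n=q^{2n^2}\prod_{j=1}^n(1+q^{-(2j-1)})$, so every summand becomes
\[ \frac{q^{n^2+2n}(-q;q^2)_n}{(q^2;q^2)_n}=\frac{q^{2n^2+2n}\prod_{j=1}^n(1+q^{-(2j-1)})}{(q^2;q^2)_n}. \]
It then suffices to read two signed-partition meanings off this one series. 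In each reading the product $\prod_{j=1}^n(1+q^{-(2j-1)})$ contributes an arbitrary set of distinct odd negative parts of absolute value at most $2n-1$, while the exponent $2n^2+2n$ can be split either as $n\cdot 2(n+1)$ --- which makes $q^{2n^2+2n}/(q^2;q^2)_n$ the generating function for partitions into exactly $n$ even parts each $\ge 2(n+1)$ (minimal case $(2(n+1))^n$, to which a partition into at most $n$ even parts is added), i.e.\ exactly $GG_{-2}(n)$ with $\ell^+=n$ --- or as $4+8+\cdots+4n$ --- which makes it the generating function for partitions into exactly $n$ even parts $\ge 4$ with successive parts differing by at least $4$ (write $\nu_i=4(n-i+1)+2\mu_i$ with $\mu_1\ge\cdots\ge\mu_n\ge 0$), i.e.\ $GG'_{-2}(n)$. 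Since the number of negative parts never exceeds $2n-1\le 2\ell^+-1\le 2\ell^+$, both descriptions are compatible with the stated conditions and the three series agree termwise.

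For the bijective statement I would introduce no new map. A partition counted by $GG_2(n)$ is precisely a partition counted by $GG_1(n)$ whose smallest part is at least $3$, so $h$ and $g$ are already defined on $GG_2[n]\subseteq GG_1[n]$. If $\gamma\in GG_2[n]$ then the smallest positive part output by $h$ equals $\gamma_j+2j-2+\wp(\gamma_j)\ge 2(j+1)$ (with equality exactly when $\gamma_j\in\{3,4\}$), so $h$ carries $GG_2[n]$ into $GG_{-2}[n]$; conversely, for $\pi\in GG_{-2}[n]$, formula~\eqref{eqsills2} gives $\gamma_r=\pi_r-2r+2-f_r\ge 3$, so $h^{-1}$ returns to $GG_2[n]$. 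The analogous computation for $g$ shows its smallest positive output is $\lambda_r+\wp(\lambda_r)\ge 4$, so $g$ restricts to a bijection $GG_2[n]\to GG'_{-2}[n]$ with $g^{-1}$ of~\eqref{eqsills4} as inverse. Parity, distinctness, the difference conditions on both sides, and the claim that $h^{-1},g^{-1}$ rebuild genuine G\"ollnitz--Gordon partitions are then verified word for word as in Theorems~\ref{AFGG} and~\ref{SFGG}.

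The step I expect to be the main obstacle is confirming that these restrictions have \emph{exactly} the claimed images, not merely images contained in $GG_{-2}[n]$ and $GG'_{-2}[n]$. Because the targets $GG_{-1}[n]$ and $GG'_{-1}[n]$ of the original maps carry, respectively, no lower-gap condition and no explicit lower bound on the positive parts, the real content is that banning the parts $1$ and $2$ on the $GG_1$ side matches precisely the shift of the positive-part threshold to $2(\ell^++1)$, respectively to $4$, on the signed side --- that the additive constants already built into $h$ and $g$ produce these thresholds on the nose and that the inverse maps never manufacture a part below $3$. This is pure bookkeeping, but it is the point most likely to hide an off-by-one.
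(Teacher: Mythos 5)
Your proposal is correct and follows essentially the same route as the paper: the identical rewriting $q^{n^2}(-q;q^2)_n = q^{2n^2}\prod_{j=1}^n(1+q^{-(2j-1)})$ with the two readings of the exponent $2n^2+2n$ as $(2n+2)+\cdots+(2n+2)$ and as $4+8+\cdots+4n$, and the bijections obtained by restricting the maps $h$ and $g$ of Theorems~\ref{AFGG} and~\ref{SFGG} to $GG_2[n]$. In fact you supply more detail than the paper, which merely asserts that those bijections ``may be verified'' to apply; your boundary checks ($\pi_j=\gamma_j+2j-2+\wp(\gamma_j)\ge 2(\ell^++1)$, $\tau_r=\lambda_r+\wp(\lambda_r)\ge 4$, and the inverse bounds giving $\gamma_r,\lambda_r\ge 3$) are exactly the bookkeeping the paper leaves implicit.
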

\begin{proof}

\begin{align}
\sum_{n=0}^\infty GG_2(n)q^n
&= 1+\sum_{n=1}^\infty \frac{q^{n^2+2n} (-q;q^2)_n}{(q^2;q^2)_n}\\
&= 1+\sum_{n=1}^\infty \frac{q^{n^2+2n} (1+q)(1+q^3)\cdots(1+q^{2n-1})}{(1-q^2)(1-q^4)\cdots(1-q^{2n})}\\
&= 1+\sum_{n=1}^\infty \frac{q^{n^2+2n} q^{n^2}(\frac{1}{q}+1)(\frac{1}{q^3}+1)\cdots(\frac{1}{q^{2n-1}}+1)}{(1-q^2)(1-q^4)\cdots(1-q^{2n})}\\
&= 1+\sum_{n=1}^\infty \frac{q^{\overbrace{(2n+2)+\cdots+(2n+2)}^{n\,  \text{times}}}(\frac{1}{q}+1)(\frac{1}{q^3}+1)\cdots(\frac{1}{q^{2n-1}}+1)}{(1-q^2)(1-q^4)\cdots(1-q^{2n})}\\
&= \sum_{n=0}^\infty GG_{-2}(n)q^n\\
&=1+\sum_{n=1}^\infty \frac{q^{4+8+\cdots+(4n)}(\frac{1}{q}+1)(\frac{1}{q^3}+1)\cdots(\frac{1}{q^{2n-1}}+1)}{(1-q^2)(1-q^4)\cdots(1-q^{2n})}\\
&=\sum_{n=0}^\infty GG'_{-2}(n)q^n. 
\end{align}
\emph{Bijective Proofs}. For this 3-way identity, it may be verified that the bijection used in the proof of Theorem \ref{AFGG} is applicable to $GG_2[n]\rightarrow GG_{-2}[n]$ while the bijection used in the proof of Theorem \ref{SFGG} works for $GG_2[n]\rightarrow GG'_{-2}[n]$. 
\end{proof}

\begin{remark}
The difference between the first G\"ollnitz-Gordon partitions (enumerated by $GG_1(n)$) and the 
second  G\"ollnitz-Gordon partitions (enumerated by $GG_2(n)$) 
is the number of partitions of $n$ into parts that mutually differ by $2$ and in which no 
consecutive even numbers appear as parts and exactly one part equals $1$ or $2$.

\begin{align} \sum_{n=0}^\infty \frac{q^{n^2} (-q;q^2)_n}{(q^2;q^2)_n} &- \sum_{n=0}^\infty \frac{q^{n^2+2n} (-q;q^2)_n}{(q^2;q^2)_n} \\ &= \sum_{n=1}^\infty \frac{q^{n^2}(-q;q^2)_n}{(q^2;q^2)_n}( 1 -q^{2n} )\\ &= \sum_{n=1}^\infty \frac{q^{n^2}(-q;q^2)_n}{(q^2;q^2)_{n-1}}\\ 
&= \sum_{n=1}^\infty \frac{q^{2n}\cdot q^{\overbrace{2n+\cdots+2n}^{n-1\,  \text{times}}}(\frac{1}{q}+1)(\frac{1}{q^3}+1)\cdots(\frac{1}{q^{2n-1}}+1)}{(1-q^2)(1-q^4)\cdots(1-q^{2n-2})}\\
&=\sum_{n=1}^\infty \frac{ q^{2+6+\cdots+(4n-1)}(\frac{1}{q}+1)(\frac{1}{q^3}+1)\cdots(\frac{1}{q^{2n-1}}+1)}{(1-q^2)(1-q^4)\cdots(1-q^{2n-2})}
\end{align}

The last two equalities show that the difference is also the number of signed partitions of $N$ in which each positive part is even and $\geq 2\ell^+$ and $2\ell^+$ is a part (or positive parts are even and differ by at least $4$), the negative parts are odd and distinct with each smaller than $2\ell^+$.
\end{remark}

\section{The Little G\"ollnitz identities}\label{gollnitzsmall}
In this section we employ the methods of Section \ref{gollnitz} to give bijective proofs of two theorems involving the two so-called ``little'' G\"ollnitz partition identities (named by K. Alladi).
They appear in G\"ollnitz's paper~\cite{G67}.

\begin{theorem}\label{thm1}
Let $LG_1(n)$ be the number of partitions of $n$ into parts that differ by at least 2 with odd parts differing by at least 4 (i.e., set of first little G\"ollnitz partitions).\\
Let $E(n)$ be the number of signed partitions of $n$ with $k$ positive parts which are even and distinct and $t$ negative parts which are odd, distinct and less than $2k$ such that the smallest positive part is greater than $2t-\delta_{1u}$, where $u$ is the smallest negative part. 
Then $LG_1(n) = E(n)$.
\end{theorem}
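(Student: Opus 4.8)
\textbf{Proof proposal for Theorem~\ref{thm1}.}

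The plan is to follow the template established in Sections~\ref{rogramid} and~\ref{gollnitz}: produce a generating-function proof first, then a bijection modeled on the map $g$ of Theorem~\ref{SFGG}. For the analytic half, I would start from the known Rogers--Ramanujan--Slater-type expansion for the first little G\"ollnitz partitions,
\begin{equation*}
\sum_{n=0}^\infty LG_1(n)q^n = \sum_{n=0}^\infty \frac{q^{n^2+n}(-q^{-1};q^2)_n}{(q^2;q^2)_n},
\end{equation*}
(or whichever normalization of the little-G\"ollnitz sum side is cited in the references already in the paper), and then perform the same manipulation used repeatedly above: split one factor of $q^{n^2}$ off the exponent, distribute it across the factors of $(-q;q^2)_n$ to turn each $1+q^{2i-1}$ into $q^{2i-1}(q^{-(2i-1)}+1)$, and absorb the leftover powers of $q$ into the base exponent. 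The resulting exponent $2+6+\cdots$ or $2(1)+\cdots$ should read off as ``positive parts even, distinct, built from a staircase'' while the product $\prod (q^{-(2i-1)}+1)$ encodes the optional odd negative parts $-1,-3,\dots,-(2k-1)$, and the $\delta_{1u}$ correction is exactly the bookkeeping that arises because the smallest negative part, when it equals $-1$, shifts the available room for the smallest positive part by one. I would present this as a single \texttt{align*} chain exactly in the style of the proof of Theorem~\ref{SFGG}.

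For the bijective half, I would define $g_1 : LG_1[n] \to E[n]$ by the same recipe as \eqref{eqsills3}: given $\lambda=(\lambda_1,\dots,\lambda_r)\in LG_1[n]$, map $\lambda_j \mapsto \tau_j := \lambda_j + \wp(\lambda_j) + 2\sum_{i=j+1}^r \wp(\lambda_i)$ and attach negative parts $-\wp(\lambda_j)\cdot(2j-1)$. The verification that each $\tau_j$ is even is identical to the one in Theorem~\ref{SFGG}. The key difference is the gap condition: in $LG_1$ the difference $\lambda_j-\lambda_{j+1}$ is $\geq 2$ in general but $\geq 4$ only when \emph{both} are odd — whereas in $GG_1$ it was $\geq 4$ when \emph{even}. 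One checks $\tau_j - \tau_{j+1} = \lambda_j - \lambda_{j+1} + \wp(\lambda_j) + \wp(\lambda_{j+1})$, and runs the three cases (both odd: $\geq 4 + 2$; mixed parity: $\geq 2+1$ — here I need to be careful that $\lambda_j-\lambda_{j+1}\geq 3$ when parities differ since they cannot be equal, so the sum is $\geq 4$; both even: $\geq 2 + 0$, so I only get $\geq 2$). This last case shows the $\tau_j$ are merely distinct even numbers, not separated by $4$ — which is precisely why the target class $E(n)$ only asks for ``even and distinct,'' not ``differ by at least $4$.'' The inverse is $\lambda_j = \tau_j - f_j - 2\sum_{i>j} f_i$ with $f_j$ read off from whether $-(2j-1)$ appears, exactly as in \eqref{eqsills4}.

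The main obstacle I anticipate is pinning down the $2t - \delta_{1u}$ bound on the smallest positive part and showing it is both necessary (the image of every $\lambda$ satisfies it) and sufficient (every signed partition meeting it is hit). Concretely: the smallest positive part of $g_1(\lambda)$ is $\tau_r = \lambda_r + \wp(\lambda_r)$, and the negative parts present are exactly $\{2j-1 : \lambda_j \text{ odd}\}$, so $t$ = number of odd $\lambda_j$ and the largest index with an odd part controls $u$. I would argue that since odd parts of $\lambda$ differ by at least $4$ and parts differ by at least $2$ overall, $\lambda_r$ is forced to be large enough relative to how many odd parts sit above it; the off-by-one $\delta_{1u}$ appears because when $\lambda_r$ itself is odd (so $u=1$, as $2r-1$ would need... actually $u=1$ forces $\lambda_1$ odd, the \emph{top} part — I'd need to sort out the indexing convention, likely $u=-1$ iff the \emph{last} negative part $-(2r-1)$... no: $u$ smallest negative part means $u = -(2j^*-1)$ for the largest odd-index $j^*$, so $u=-1$ iff $\lambda_1$ odd). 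Disentangling which extreme of $\lambda$ governs $u$ versus $\tau_r$, and checking the inequality survives the substitution in both directions, is the delicate step; everything else is a routine transcription of the Theorem~\ref{SFGG} argument. I would close with a worked \begin{example}...\end{example} in parallel with the others, e.g.\ taking a small $\lambda\in LG_1[n]$ with a mix of odd and even parts so the $\delta_{1u}$ term is exercised.
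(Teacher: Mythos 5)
There is a genuine gap in the bijective half, and it is exactly at the point you flagged as ``the delicate step'' but did not resolve. You propose to reuse the map of Theorem~\ref{SFGG} verbatim: $\tau_j=\lambda_j+\wp(\lambda_j)+2\sum_{i=j+1}^r\wp(\lambda_i)$ with negative parts $-\wp(\lambda_j)\cdot(2j-1)$. Under that convention the smallest positive part of the image is $\tau_r=\lambda_r+\wp(\lambda_r)$, while $t$ is the total number of odd parts of $\lambda$ and $u=1$ exactly when the \emph{largest} part $\lambda_1$ is odd; the inequality ``smallest positive part $>2t-\delta_{1u}$'' then couples opposite ends of $\lambda$ and simply fails. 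Concretely, $\lambda=(9,5,1)\in LG_1[15]$ maps under your recipe to $(14,8,2,-1,-3,-5)$, where $t=3$, $u=1$, and the smallest positive part $2$ is not greater than $2t-\delta_{1u}=5$; so your map does not land in $E[n]$, and the theorem as stated is not proved by it. The paper avoids this by reflecting the construction: it sets $\pi_j=\lambda_j+\wp(\lambda_j)+2\sum_{i=1}^{j-1}\wp(\lambda_i)$ (summing over the \emph{larger} parts, so the biggest correction is added to the smallest part) and attaches the negative parts in reversed order, $-1\cdot\wp(\lambda_k),-3\cdot\wp(\lambda_{k-1}),\dots,-(2k-1)\cdot\wp(\lambda_1)$. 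Then $\pi_k=\lambda_k+2t-\wp(\lambda_k)$ with $\wp(\lambda_k)=\delta_{1u}$, so the bound $\pi_k>2t-\delta_{1u}$ is automatic from $\lambda_k\ge 1$ and is precisely the condition guaranteeing that the inverse $\lambda_j=\pi_j-f_{k-j+1}-2\sum_{i=1}^{j-1}f_{k-i+1}$ returns a positive smallest part; this also changes the gap computation to $\pi_j-\pi_{j+1}=\lambda_j-\lambda_{j+1}-\wp(\lambda_j)-\wp(\lambda_{j+1})\ge 2$, which is why the image consists of distinct even parts.

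Two smaller points. First, your case analysis $\tau_j-\tau_{j+1}=\lambda_j-\lambda_{j+1}+\wp(\lambda_j)+\wp(\lambda_{j+1})$ is internally consistent for your map, but it is the wrong sign of correction for hitting $E[n]$, as above. Second, the analytic half you sketch (splitting $q^{n^2}$ across $(-q;q^2)_n$, as in Theorem~\ref{SFGG}) is left entirely schematic, and in particular you do not explain how a straightforward product manipulation would produce the non-product condition ``smallest positive part $>2t-\delta_{1u}$''; the paper does not attempt a generating-function proof of this theorem at all and relies solely on the reflected bijection, so the burden of your proof rests on the bijection, which as written does not work.
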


\begin{proof} The proof is analogous to that of Theorem \ref{SFGG}.

Every $\lambda\in LG_1[n]$ has the form $\lambda = (\lambda_1,\lambda_2,\ldots,\lambda_k)$, where $\lambda_j>\lambda_{j+1}+1$ such that when both parts are odd, then $\lambda_j>\lambda_{j+1}+2$.
But each $\pi\in E[n]$ may be expressed as 

$\pi = (\pi_1,\ldots,\pi_k,-u_1,-u_2,\ldots,-u_t)$, where $\pi_1 > \ldots > \pi_k > -u_1 > \cdots > -u_t,\, t\geq k\geq 1\ $ such that $\ \pi_i \equiv 0,\, u_j\equiv 1$ (mod 2) $\forall\, i, j$ with $\pi_k>2t-\delta_{1u_1}$.

Define a map $\phi: LG_1[n]\rightarrow E[n]$. If $\phi(\lambda)=\pi\in E[n]$, then we have
\begin{equation}\label{eqn3}
\pi = (\pi_1,\pi_2,\cdots,\pi_k,-1\wp(\lambda_k),-3\wp(\lambda_{k-1}), \cdots,-(2k-1)\wp(\lambda_1)),
\end{equation}
where 
\begin{equation}\label{eqn4}
\pi_j=\lambda_j+\wp(\lambda_j)+2\sum_{i=1}^{j-1}\wp(\lambda_i),\ 1\leq j\leq k.
\end{equation}
We see that $\pi_j\equiv 0$ (mod 2) for all $i$, and if $\lambda$ has no negative parts, then $\pi = \lambda$. 

It is clear that the number of odd parts of $\lambda$ is equal to the number of negative parts of $\pi$. Moreover, for any index $j$, we have $\pi_j - \pi_{j+1} = \lambda_j - \lambda_{j+1} - \wp(\lambda_j) - \wp(\lambda_{j+1})\geq 2$ since $\lambda_j - \lambda_{j+1}\geq 2,3,4$ and $\wp(\lambda_j) + \wp(\lambda_{j+1}) = 0,1,2$ when $\lambda_j$ and $\lambda_{j+1}$ are both even, have opposite parities or both odd, respectively.

The inverse map $\phi^{-1}$ is obtained as follows. 
Let $\pi=(\pi_1,\pi_2,\cdots, \pi_k,-f_1\cdot 1,-f_2\cdot 3,\cdots , -f_k\cdot (2k-1))\in E[n]$, where the $f_j$ are defined for $\pi$ as in \eqref{eqchar1}. Then
\begin{equation}\label{eqsills4a}
\pi \longmapsto (\lambda_1,\lambda_2,\cdots, \lambda_k),
\end{equation}
where 
\begin{equation}\label{eqsills4b}
\lambda_j=\pi_j-f_{k-j+1}-2\sum_{i=1}^{j-1}f_{k-i+1}, \ 1\leq j\leq k.
\end{equation}

Finally, note that 
$$\max(\pi_j-\lambda_j\mid 1\leq j\leq k)= f_{1} + 2\sum_{i=1}^{k-1}f_{k-i+1} = f_{1} + 2\sum_{i=2}^{k}f_{i} = 2t-\delta_{1u},$$
where $t$ is the number of odd parts and $u$ is the smallest odd part size of $\pi$.
But the smallest part of $\lambda$ which is $\lambda_k$ satisfies
$\lambda_k=\pi_k-(2t-\delta_{1u})>0.$
\vskip  4pt

For example consider $\lambda = (31,26,24,21,17,14,11,7,4)\in LG_1[155]$.  
Then using \eqref{eqn3} and \eqref{eqn4} we obtain
$\phi(\lambda) = \pi =(32, 28, 26, 24, 22, 20, 18, 16, 14,0, -3, -5, 0, -9, -11, 0, 0, -17)\in E[155].$

Conversely, the pre-image of $\pi$ may be similarly recovered using \eqref{eqsills4a} and \eqref{eqsills4b}.
\end{proof}

The following is a dual identity to Theorem \ref{thm1} that corresponds to the second little 
G\"ollnitz idenity. The assertion is analogous to Theorem \ref{thm1} and may be proved in the same manner.

\begin{theorem}\label{thm2z}
Let $LG_2(n)$ be the number of partitions of $n$ without 1's in which parts differ by at least 2 with odd parts differing by at least 4 (i.e., the second little G\"ollnitz partitions).\\
Let $T(n)$ be the number of signed partitions of $n$ with $k$ positive parts which are even and distinct and $t$ negative parts which are odd, distinct and less than $2k$ such that the smallest positive part is greater than $2t+\delta_{1u}$, where $u$ is the smallest odd part.
Then $LG_2(n) = T(n)$.
\end{theorem}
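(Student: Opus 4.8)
\textbf{Proof proposal for Theorem \ref{thm2z}.}

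The plan is to mirror the proof of Theorem \ref{thm1} with the single modification dictated by the extra hypothesis ``no $1$'s'' on the partition side, which is what flips $\delta_{1u}$ from $-\delta_{1u}$ to $+\delta_{1u}$. First I would record the generating-function identity: the sum side of the second little G\"ollnitz identity is $\sum_{n\geq 1} q^{n^2+2n}(-q^{-1};q^{-2})_n/(q^2;q^2)_n$ after the same substitution $q^{n^2}(-q;q^2)_n = q^{2n^2}(-q^{-1};q^{-2})_n$ used in Theorems \ref{SFGG} and \ref{SGG}; reading off the exponent $n^2+2n+n^2 = 2n^2+2n = 4+8+\cdots$ no, rather $(2n+2)+(2n)+\cdots$ — the point is that the positive parts become even, distinct, and (because of the ``no $1$'s'' shift) bounded below by $2t+2$ rather than $2t$, while the factor $(-q^{-1};q^{-2})_n/(q^2;q^2)_n$ contributes the odd, distinct negative parts $<2k$ exactly as before. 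This gives the analytic proof that $LG_2(n)=T(n)$.

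For the bijection I would define $\psi: LG_2[n]\to T[n]$ by exactly the formulas \eqref{eqn3} and \eqref{eqn4}, i.e.\ $\pi_j = \lambda_j + \wp(\lambda_j) + 2\sum_{i=1}^{j-1}\wp(\lambda_i)$ with the negative parts $-(2k-1)\wp(\lambda_1),\ldots,-1\cdot\wp(\lambda_k)$, and inverse given by \eqref{eqsills4a}--\eqref{eqsills4b}. The verifications that $\pi_j$ is even, that the number of odd parts of $\lambda$ equals the number of negative parts of $\pi$, that the $\pi_j$ are strictly decreasing with gaps $\geq 2$, and that the negative parts are odd, distinct, and $<2k$, are verbatim the same as in Theorem \ref{thm1} and I would simply cite that proof. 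The only genuinely new point is the lower bound on $\pi_k$. As in Theorem \ref{thm1}, $\max_j(\pi_j-\lambda_j) = f_1 + 2\sum_{i=2}^{k}f_i$, which equals $2t$ if $u\neq 1$ (no part of size $1$ among the negative parts, so $f_1=0$) and $2t-1$ if $u=1$; that is, $\pi_k - \lambda_k = 2t - \delta_{1u}$. Now invoke the hypothesis $\lambda_k\geq 2$ (no $1$'s, and $\lambda_k$ even forces... wait—$\lambda_k$ need not be even): since $\lambda\in LG_2[n]$ has no part equal to $1$, we have $\lambda_k\geq 2$, hence $\pi_k \geq 2t - \delta_{1u} + 2 > 2t - \delta_{1u} + \delta_{1u}\cdot 0$... more precisely $\pi_k \geq 2t+2-\delta_{1u} \geq 2t + 1 > 2t$ when $u=1$ and $\pi_k\geq 2t+2>2t$ when $u\neq1$, and in fact in the regime where $u=1$ one checks $\lambda_k=\pi_k-(2t-1)\geq 2$ gives $\pi_k\geq 2t+1 = 2t+\delta_{1u}$, while $u\neq1$ gives $\pi_k\geq 2t+2>2t+\delta_{1u}$; thus $\pi_k>2t+\delta_{1u}-1$ is not quite it, so let me state the clean inequality $\pi_k \geq \lambda_k + 2t-\delta_{1u} \geq 2+2t-\delta_{1u}\geq 2t+\delta_{1u}+1 > 2t+\delta_{1u}$ using $\delta_{1u}\in\{0,1\}$ so $2-\delta_{1u}\geq 1\geq \delta_{1u}$ with the extra $+1$ coming from $\lambda_k\geq 2 > 1$. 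Hence $\pi\in T[n]$. Conversely, given $\pi\in T[n]$, the same computation shows the reconstructed $\lambda_k = \pi_k - (2t-\delta_{1u}) > \delta_{1u}\geq 0$ and being a positive integer congruent appropriately it satisfies $\lambda_k\geq 2$ when... here one uses $\pi_k>2t+\delta_{1u}$ to get $\lambda_k\geq \delta_{1u}+1$; if $u=1$ this gives $\lambda_k\geq 2$, and if $u\neq 1$ one still needs $\lambda_k\neq 1$, which follows because $\lambda_k = \pi_k - 2t$ is even (as $\pi_k$ even and... no, $\lambda_k$ need not be even), so this direction needs the sharper bookkeeping that $\pi_k-\lambda_k$ has the parity of the number of odd $\lambda_i$'s in a way that pins down $\wp(\lambda_k)$. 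I would handle this exactly as Theorem \ref{thm1}'s inverse does, noting the symmetry of the two statements.

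The main obstacle, then, is purely the careful case analysis at the smallest part: making sure the off-by-one from forbidding $1$'s in $LG_2$ is exactly absorbed by changing $-\delta_{1u}$ to $+\delta_{1u}$, and checking that both directions of the correspondence respect the boundary condition. Everything else — parities, gaps of the positive parts, distinctness and the $<2k$ bound on the negative parts, and the count of odd parts matching the count of negative parts — transfers word-for-word from the proof of Theorem \ref{thm1}, so I would present the proof as ``identical to that of Theorem \ref{thm1} except at the smallest part, where $\lambda_k\geq 2$ yields $\pi_k = \lambda_k + 2t-\delta_{1u} \geq 2t+\delta_{1u}+1 > 2t+\delta_{1u}$, and conversely.'' I would close with a worked example parallel to the one in Theorem \ref{thm1}, e.g.\ applying $\psi$ to a partition such as $(33,28,26,23,19,16,13,9,6)\in LG_2$ to illustrate that the smallest positive part of the image exceeds $2t+\delta_{1u}$.
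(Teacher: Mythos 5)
Your overall route is exactly the paper's: Theorem~\ref{thm2z} is proved by the same bijection as Theorem~\ref{thm1}, the only new content being the boundary condition at the smallest part (the paper itself says only that the assertion ``may be proved in the same manner,'' adding the observation $\pi_k-2\geq 2t-\delta_{1u}$). However, the one step you yourself flag as ``the only genuinely new point'' is precisely where your argument has a genuine gap. Your final ``clean inequality'' $\pi_k \geq 2+2t-\delta_{1u}\geq 2t+\delta_{1u}+1$ is false when $\delta_{1u}=1$: there $2+2t-1=2t+1$, so from $\lambda_k\geq 2$ alone you get only $\pi_k\geq 2t+1=2t+\delta_{1u}$, not the strict inequality $\pi_k>2t+\delta_{1u}$ required by the definition of $T(n)$ (you in fact record the weaker $\pi_k\geq 2t+1=2t+\delta_{1u}$ a few lines earlier and then assert the false chain anyway). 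In the converse direction you likewise leave unresolved whether $\lambda_k=1$ can occur when $u\neq 1$, and you wrongly doubt that $\lambda_k$ is even in that case.

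The missing idea is one line of parity bookkeeping, which is exactly what the paper's post-theorem remark encodes: under the map $f_1=\wp(\lambda_k)$, so $\lambda_k$ is odd precisely when $u=1$, and every $\pi_j$ is even. Forward: if $u=1$ then $\lambda_k$ is odd and $\neq 1$, hence $\lambda_k\geq 3$ and $\pi_k=\lambda_k+2t-1\geq 2t+2>2t+1$; if $u\neq 1$ then $\lambda_k$ is even, hence $\geq 2$, and $\pi_k=\lambda_k+2t\geq 2t+2>2t$. Conversely: if $u=1$, then $\pi_k>2t+1$ together with $\pi_k$ even gives $\pi_k\geq 2t+2$, so $\lambda_k=\pi_k-(2t-1)\geq 3$; if $u\neq 1$, then $\lambda_k=\pi_k-2t$ is even and positive, hence $\geq 2$, so no $1$ is created. (Note that the paper's stated bound $\pi_k>2t+1-\delta_{1u}$ and the theorem's $\pi_k>2t+\delta_{1u}$ are equivalent because $\pi_k$ and $2t$ are even.) With this inserted, both directions close and everything else does transfer verbatim from Theorem~\ref{thm1}, as you say; your generating-function paragraph, as written, does not establish anything about the $\delta_{1u}$ condition and can simply be dropped, since the paper treats these little G\"ollnitz signed-partition theorems bijectively.
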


Observe that Theorem \ref{thm2z} satisfies $\pi_k-2\geq 2t-\delta_{1u}$ or $\pi_k > 2t + 1 - \delta_{1u}$.  

\bigskip

\begin{theorem}
Let $LG_1(n)$ denote the number of partitions of $n$ in which the difference between parts are at least $2$ and at least $4$ if the parts are odd. Let $LG_{-1}(n)$ denote the number of signed partitions of $n$ in which positive parts are at least $5$, odd and differ by at least $4$ and negative parts are distinct, $\equiv 1 \pmod 2$, and at most $2\ell^++1$. Then
\begin{equation} \label{An1}
LG_1(n) = LG_{-1}(n-1).
\end{equation} 
\end{theorem}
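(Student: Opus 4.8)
I would give a generating-function proof in the spirit of Theorems~\ref{SFGG} and~\ref{thm1} (a bijective proof is presumably also available along the same lines, but looks distinctly more delicate here). \emph{Step 1: recast both sides as $q$-series.} Start from the classical analytic form of the first little G\"ollnitz identity (G\"ollnitz~\cite{G67}),
\[
\sum_{n\ge 0} LG_1(n)\,q^n \;=\; \sum_{n\ge 0}\frac{q^{n^2+n}(-q^{-1};q^2)_n}{(q^2;q^2)_n}.
\]
For the other side, sort a signed partition counted by $LG_{-1}$ by $n=\ell^+$: the positive parts form a partition into exactly $n$ odd parts $\ge 5$ with successive gaps $\ge 4$ (minimal sum $5+9+\cdots+(4n+1)=2n^2+3n$, the remaining freedom giving the factor $1/(q^2;q^2)_n$), while the negative-part sizes run over all subsets of $\{1,3,\dots,2n+1\}$, contributing $\prod_{i=0}^{n}(1+q^{-(2i+1)})=q^{-(n+1)^2}(-q;q^2)_{n+1}$. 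Since $2n^2+3n-(n+1)^2=n^2+n-1$,
\[
\sum_{m} LG_{-1}(m)\,q^m\;=\;\sum_{n\ge 0}\frac{q^{2n^2+3n}}{(q^2;q^2)_n}\prod_{i=0}^{n}\!\bigl(1+q^{-(2i+1)}\bigr)\;=\;\sum_{n\ge 0}\frac{q^{n^2+n-1}(-q;q^2)_{n+1}}{(q^2;q^2)_n}
\]
(the summand $n=0$ being $1+q^{-1}$, the term $q^{-1}$ recording the signed partition $(-1)$, i.e.\ $LG_{-1}(-1)=1$). Multiplying this by $q$ and comparing with the classical identity, one sees that the theorem is equivalent to the $q$-series identity
\[
\sum_{n\ge 0}\frac{q^{n^2+n}(-q^{-1};q^2)_n}{(q^2;q^2)_n}\;=\;\sum_{n\ge 0}\frac{q^{n^2+n}(-q;q^2)_{n+1}}{(q^2;q^2)_n}.
\]

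\emph{Step 2 (the main obstacle): prove this identity.} In contrast to the earlier theorems the two series do not agree term by term, so I would telescope. Write $A_n$ (resp.\ $B_n$) for the $n$-th summand of the left (resp.\ right) side. Using $(-q^{-1};q^2)_n=(1+q^{-1})(-q;q^2)_{n-1}$ and $(-q;q^2)_{n+1}=(1+q^{2n-1})(1+q^{2n+1})(-q;q^2)_{n-1}$ for $n\ge 1$, together with $B_0-A_0=(1+q)-1=q$, one gets
\[
\sum_{n\ge 0}(B_n-A_n)\;=\;q+q^{-1}\sum_{n\ge 1}b_n\bigl(q^{2n}+q^{2n+2}+q^{4n+1}-1\bigr),\qquad b_n:=\frac{q^{n^2+n}(-q;q^2)_{n-1}}{(q^2;q^2)_n}.
\]
A short computation yields $b_{n+1}(1-q^{2n+2})=b_n(q^{2n+2}+q^{4n+1})$; substituting $b_n(q^{2n+2}+q^{4n+1})=b_{n+1}-b_{n+1}q^{2n+2}$, the inner sum telescopes---via $\sum_{n\ge 1}(b_{n+1}-b_n)=-b_1$ and $\sum_{n\ge 1}\bigl(b_nq^{2n}-b_{n+1}q^{2n+2}\bigr)=b_1q^2$---to $b_1(q^2-1)=-q^2$, since $b_1=q^2/(1-q^2)$. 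Hence $\sum_{n\ge 0}(B_n-A_n)=q+q^{-1}\cdot(-q^2)=q-q=0$, which together with Step~1 proves the theorem. (The identity can presumably also be obtained from a ${}_2\phi_1$ transformation, but the telescoping is self-contained.)

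\emph{Step 3 (bijective version).} Here I would try to mimic the map $\phi$ of Theorem~\ref{thm1}: send $\lambda=(\lambda_1>\cdots>\lambda_k)\in LG_1[n]$ to the signed partition whose $j$-th positive part is $\lambda_j+\bigl(1-\wp(\lambda_j)\bigr)+2\,\#\{i>j:\lambda_i\text{ even}\}$ (each such part is odd, and a brief parity case-check on $\lambda_j,\lambda_{j+1}$---using that in $LG_1$ two odd parts differ by $\ge 4$ and a pair of opposite parities by $\ge 3$---forces all consecutive differences to be $\ge 4$), with one distinct odd negative part recording each even $\lambda_i$, followed by a single calibrated adjustment that raises the smallest positive part to $\ge 5$ and lowers the total weight to $n-1$. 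I expect this final step to be where the work lies: the adjustment has to be arranged so that the negative parts stay distinct, odd and $\le 2\ell^{+}+1$ while the weight drops by exactly one, and---since the $q$-identity of Step~2 holds only after telescoping---the resulting map does not send the number of parts of $\lambda$ to $\ell^+$, so the correspondence is genuinely subtler than the ones used earlier in the paper.
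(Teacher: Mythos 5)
Your argument is correct, and it differs from the paper's in one substantive way. The paper's proof takes the identity
\[
\sum_{n\ge0}\frac{q^{n^2+n}(-q^{-1};q^2)_n}{(q^2;q^2)_n}
=\sum_{n\ge0}\frac{q^{n^2+n}(-q;q^2)_{n+1}}{(q^2;q^2)_n}
\]
as given, citing it from the authors' submitted paper [AMS25], and then performs the same ``reverse the odd factors'' manipulation you do in Step~1 (writing $(-q;q^2)_{n+1}=q^{(n+1)^2}(-q^{-1};q^{-2})_{n+1}$ and collecting the exponent $2n^2+3n+1$) to read off the signed-partition interpretation with the shift by $1$. You travel the same road but in the opposite direction -- computing the generating function of $LG_{-1}$ directly and reducing the theorem to exactly that $q$-series identity -- and then, crucially, you prove the identity yourself by the telescoping argument of Step~2. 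I checked the details: $b_{n+1}(1-q^{2n+2})=b_n(q^{2n+2}+q^{4n+1})$ is right, both telescoping sums converge formally since $b_n=O(q^{n^2+n})$, and $b_1(q^2-1)=-q^2$ gives the cancellation $q+q^{-1}(-q^2)=0$. So your proof is self-contained where the paper's rests on an external unpublished reference; that is a genuine gain. You are also more careful than the paper's final line about the $n=0$ term, noting explicitly that $LG_{-1}(-1)=1$ (the signed partition $(-1)$), which is what makes the shift $LG_1(n)=LG_{-1}(n-1)$ hold at $n=0$. Step~3 is, as you say, speculative and incomplete, but it is not needed: the paper itself offers no bijection for this theorem, so Steps~1--2 alone match (indeed strengthen) what the paper establishes.
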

\begin{proof}
    The present authors~\cite{AMS25} showed that 
\begin{equation}\label{AMS1}
    \sum_{n=0}^\infty LG_1(n)q^n=\sum_{n=0}^\infty \frac{q^{n(n+1)}(-q^{-1};q^2)_n}{(q^2;q^2)_{n}}=\sum_{n=0}^\infty \frac{q^{n(n+1)}(-q;q^2)_{n+1}}{(q^2;q^2)_{n}}.
\end{equation}
Therefore, from~\ref{AMS1} we get
\begin{align}
    \sum_{n=0}^\infty \frac{q^{n(n+1)}(-q;q^2)_{n+1}}{(q^2;q^2)_{n}}&=\sum_{n=0}^\infty \frac{q^{2+4+\cdots+2n}(-q;q^2)_{n+1}}{(q^2;q^2)_{n}}\\
    &=\sum_{n=0}^\infty \frac{q^{(n+1)^2}q^{2+4+\cdots+2n}(-q^{-1};q^{-2})_{n+1}}{(q^2;q^2)_{n}}\\
    &=\sum_{n=0}^\infty\frac{q\cdot q^{3+5+\cdots+(2n+1)}q^{2+4+\cdots+2n}(-q^{-1};q^{-2})_{n+1}}{(q^2;q^2)_{n}}\\
    &=\sum_{n=0}^\infty\frac{q\cdot q^{5+9+\cdots+(4n+1)}(-q^{-1};q^{-2})_{n+1}}{(q^2;q^2)_{n}}\\
    &=q\sum_{n=0}^\infty LG_{-1}(n)q^n
\end{align}

\end{proof}

\begin{theorem}\label{thmflg}
 Let $LG_1(n)$ denote the number of partitions of $n$ in which the difference between parts are at least $2$ and at least $4$ if the parts are odd. Let $LG'_{-1}(n)$ denote the number of signed partitions of $n$ in which positive parts are distinct and even and negative parts are distinct, $\equiv 1 \pmod 4$, and at most $2\ell^+$. Then
\begin{equation} \label{An1}
LG_1(n) = LG'_{-1}(n).
\end{equation}   
\end{theorem}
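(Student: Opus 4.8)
The plan is to follow the same template as in the proofs of Theorems~\ref{SFGG} and \ref{thm1}: establish the identity analytically by manipulating a known Rogers--Ramanujan type series for $LG_1(n)$, and then convert that manipulation into an explicit bijection $g:LG_1[n]\rightarrow LG'_{-1}[n]$. The analytic half should start from one of the series representations in \eqref{AMS1}, namely $\sum_{n\ge 0} LG_1(n)q^n = \sum_{n\ge 0} q^{n(n+1)}(-q^{-1};q^2)_n/(q^2;q^2)_n$. The key algebraic move is to distribute factors of $q$ from the numerator's power $q^{n(n+1)}=q^{2+4+\cdots+2n}$ (paired appropriately with the shifted $q$-power arising from reading $(-q^{-1};q^2)_n$ as $q^{-1+1}(-q^{-1};q^2)_n$ type rewrites) so that each odd-index factor $(1+q^{-(4i-3)})$ contributes a ``negative part $\equiv 1 \pmod 4$'' of size at most $2n$, while the even part power telescopes into $2+4+\cdots$ matching distinct even positive parts. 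I would compute the exponent bookkeeping carefully to confirm that the residue class mod $4$ (rather than mod $2$, as in Theorem~\ref{SFGG}) is exactly what emerges, since $(-q^{-1};q^2)_n = \prod_{i=0}^{n-1}(1+q^{2i-1})$ has exponents $-1,1,3,5,\ldots$, i.e.\ $\equiv 1 \pmod 2$, and the pairing with the $q^{n(n+1)}$ factor shifts half of them out — this is the step where the mod-$4$ condition must be verified rather than assumed.

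For the bijection, given $\lambda=(\lambda_1,\ldots,\lambda_r)\in LG_1[n]$ with $\lambda_1>\lambda_2>\cdots>\lambda_r$ and the difference-$2$/difference-$4$-for-odd-parts conditions, I would define $g(\lambda)$ by setting the positive parts to be $\lambda_j$ shifted by a cumulative count of odd parts among $\lambda_1,\ldots,\lambda_j$ (exactly as $\tau_j$ in \eqref{eqsills3}, using the $\wp$ function), forcing each positive part to become even, and letting the negative parts record which $\lambda_j$ were odd, with sizes drawn from $\{1,5,9,\ldots,4r-3\}$ so as to land in the class $1\pmod 4$. The inverse strips the negative parts off, reads off which indices correspond to odd original parts via an analogue of \eqref{eqchar1}, and subtracts the cumulative shift. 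I would then verify three things: (1) the positive parts of $g(\lambda)$ are even and distinct with gaps $\ge$ whatever the series forces (here distinctness suffices, so I must check the shifted parts stay strictly decreasing — this uses $\lambda_j-\lambda_{j+1}\ge 2$ and the extra $\ge 4$ when both are odd, mirroring the gap computation in the proof of Theorem~\ref{thm1}); (2) the negative parts are distinct, $\equiv 1\pmod 4$, and bounded by $2\ell^+$ (the bound following because the largest possible negative-part size is $4r-3 < 2\cdot(\text{something})$ — I need to reconcile this with the stated bound $2\ell^+$, which may require the negative-part sizes to actually be $1,3,5,\ldots$ re-indexed, so I should double-check whether ``$\equiv 1\pmod 4$'' together with ``at most $2\ell^+$'' is consistent with there being up to $r$ of them, possibly forcing a different indexing than $4j-3$); and (3) size is preserved, $|g(\lambda)|=n$, which is immediate from the telescoping once the shifts and negative parts are matched to the exponent rearrangement.

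The main obstacle I anticipate is reconciling the negative-part constraints: the identity as stated says negative parts are distinct, $\equiv 1 \pmod 4$, and $\le 2\ell^+$. If there are $k$ positive parts and up to $k$ negative parts all in $\{1,5,9,\ldots\}$ and all $\le 2k$, then we can have at most $\lceil k/2\rceil$ such values, so either the number of negative parts is genuinely smaller than $k$ in the relevant configurations, or the bound/indexing in the bijection needs to be $\{1,3,\ldots,2k-1\}$ intersected appropriately — I expect the resolution is that the ``$\equiv 1\pmod 4$'' values used are $1,5,\ldots$ but the shift structure guarantees no more than the available slots are ever needed, which must be extracted from the $LG_1$ gap conditions (an odd part in $\lambda$ is followed by a gap of $\ge 4$, limiting how many odd parts can occur). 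I would verify this counting compatibility first, before writing the bijection, since it dictates the exact form of the negative-part labels. The rest — parity of shifted parts, strict decrease, size preservation, and invertibility — should go through routinely by the same arguments already used for Theorems~\ref{SFGG}, \ref{SGG}, and \ref{thm1}.
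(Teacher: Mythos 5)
There is a genuine gap, and it is exactly the one you flag but do not resolve: the starting series. The paper's proof does not work from the representation in \eqref{AMS1}; it starts from the Savage--Sills ``new little G\"ollnitz'' identity \cite{SS11},
\begin{equation*}
\sum_{n=0}^\infty LG_1(n)q^n=\sum_{n=0}^\infty \frac{q^{2n^2-n}(-q;q^4)_n}{(q^2;q^2)_{2n}},
\end{equation*}
and then simply pulls $q^{2n^2-n}=q^{1+5+\cdots+(4n-3)}$ out of $(-q;q^4)_n$ to turn it into $(1+q^{-1})(1+q^{-5})\cdots(1+q^{-(4n-3)})$, leaving $q^{0+2+\cdots+2(2n-1)}/(q^2;q^2)_{2n}$ for the positive parts. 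Here the $\equiv 1\pmod 4$ condition on the negative parts comes for free from the base $q^4$ in $(-q;q^4)_n$, and --- crucially --- the denominator index is $2n$, so the signed partition has roughly $2n$ distinct even positive parts while the product supplies at most $n$ negative parts bounded by $4n-3\le 2\ell^+$. This is precisely the counting compatibility you worry about in your last paragraph: with your starting point $\sum q^{n(n+1)}(-q^{-1};q^2)_n/(q^2;q^2)_n$ there are $n$ binary factors against only $n$ positive-part slots, and the residues $\equiv 1\pmod 4$ below $2\ell^+\le 2n$ number only about $n/2$, so no redistribution of the staircase $q^{2+4+\cdots+2n}$ into the factors $(1+q^{2i-3})$ can produce the stated conditions. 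Reaching the mod-$4$ form from \eqref{AMS1} would amount to re-proving the Savage--Sills identity, which is a nontrivial result (it rests on an identity of Gessel and Stanton), not a bookkeeping step.

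The bijective half of your proposal inherits the same defect. A map modeled on \eqref{eqsills3} tags the odd parts of $\lambda$ with labels $1,3,\ldots,2r-1$, which are odd but not $\equiv 1\pmod 4$; if you instead use labels $1,5,9,\ldots,4r-3$, then weight is no longer preserved by the $\wp$-shifts as written (try $\lambda=(9,5,1)$: the shifted positive parts plus negative parts $-1,-5,-9$ sum to $9$, not $15$), and the bound ``negative parts $\le 2\ell^+$'' fails as soon as $\lambda$ has at least two odd parts, since $4r-3>2r$. A correct combinatorial proof would have to produce on the order of twice as many positive parts as there are factors of $(-q;q^4)_n$, i.e.\ a structurally different construction from the Theorem~\ref{SFGG}/Theorem~\ref{thm1} template; the paper itself gives only the analytic argument for this theorem. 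So the proposal, as it stands, identifies the right difficulty but has no mechanism to overcome it.
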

\begin{proof}
 In~\cite{SS11} Savage and the third author derived new version of the first
 little G\"ollnitz identity 
 and they showed that $$\sum_{n=0}^\infty LG_1(n)q^n=\sum_{n=0}^\infty \frac{q^{2n^2-n}(-
 q;q^4)_n}{(q^2;q^2)_{2n}}.$$ 
Therefore, 
 
 \begin{align}
\sum_{n=0}^\infty \frac{q^{2n^2-n} (-q;q^4)_n}{(q^2;q^2)_{2n}}
&= 1+\sum_{n=1}^\infty \frac{q^{2n^2-n} (-q;q^4)_n}{(q^2;q^2)_{2n}}\\
&= 1+\sum_{n=1}^\infty \frac{q^{2n^2-n} (1+q)(1+q^5)\cdots(1+q^{4n-3})}{(1-q^2)(1-q^4)\cdots(1-q^{4n})}\\
&= 1+\sum_{n=1}^\infty \frac{q^{2n^2-n} q^{2n^2-n}(\frac{1}{q}+1)(\frac{1}{q^5}+1)\cdots(\frac{1}{q^{4n-3}}+1)}{(1-q^2)(1-q^4)\cdots(1-q^{4n})}\\
&= 1+\sum_{n=1}^\infty \frac{q^{0+2+\cdots+2(2n-1)}(\frac{1}{q}+1)(\frac{1}{q^5}+1)\cdots(\frac{1}{q^{4n-3}}+1)}{(1-q^2)(1-q^4)\cdots(1-q^{4n})}\\
&=\sum_{n=0}^\infty LG'_{-1}(n)q^n .
\end{align}

\end{proof}

\begin{theorem} \label{SLG}
 Let $LG_2(n)$ denote the number of partitions of $n$ into parts $\geq 2$ and the difference between parts are at least $2$ and at least $4$ if the parts are odd. Let $LG_{-2}(n)$ denote the number of signed partitions of $n$ where the positive parts are odd and at least $2\ell^+$, the negative parts are odd, distinct and at most $2\ell^+$. Let $LG'_{-2}(n)$ denote the number of signed partitions of $n$ in which positive parts are $\geq3$, odd and differ by $4$, and negative parts are odd, distinct and at most $2\ell^+-1$. Then
\begin{equation} \label{LG2x}
LG_2(n) = LG_{-2}(n)=LG'_{-2}(n).
\end{equation}
\end{theorem}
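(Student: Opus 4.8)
The plan is to mirror the treatment of Theorems~\ref{SFGG} and~\ref{SGG}, establishing the two equalities both analytically and bijectively. On the analytic side I would begin from the Rogers--Slater-type representation
\[
\sum_{n=0}^\infty LG_2(n)q^n = 1 + \sum_{n=1}^\infty \frac{q^{n^2+n}(-q;q^2)_n}{(q^2;q^2)_n},
\]
the second-little-G\"ollnitz counterpart of the $GG_2$-series used in the proof of Theorem~\ref{SGG}; this can be verified directly against the appropriate residue classes $\pmod 8$ or extracted from the analysis of~\cite{AMS25}. Then, exactly as in those proofs, I would use $(-q;q^2)_n=q^{n^2}(-q^{-1};q^{-2})_n$ (push one factor $q^{2k-1}$ into each binomial $1+q^{2k-1}$) to turn the summand into $q^{2n^2+n}(-q^{-1};q^{-2})_n/(q^2;q^2)_n$; here each factor of $(-q^{-1};q^{-2})_n=\prod_{k=1}^n(1+q^{-(2k-1)})$ optionally contributes a negative part of odd size $2k-1\le 2n-1$, yielding the ``negative parts odd, distinct, at most $2\ell^+-1$'' condition. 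It then remains to read the positive-part factor $q^{2n^2+n}/(q^2;q^2)_n$ as a staircase in the two natural ways: $2n^2+n=\overbrace{(2n+1)+\cdots+(2n+1)}^{n}$ produces partitions into $n$ odd parts all $\ge 2n+1$ (i.e.\ $\ge 2\ell^+$), which is $LG_{-2}(n)$, while $2n^2+n=3+7+\cdots+(4n-1)$ produces partitions into $n$ odd parts $\ge 3$ with consecutive differences $\ge 4$, which is $LG'_{-2}(n)$.

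For the bijective proof I would assemble the result from two maps. The first, $\Psi\colon LG_{-2}[n]\to LG'_{-2}[n]$, fixes the negative parts and re-coordinatizes the positive parts $\mu_1\ge\cdots\ge\mu_r$ by the difference of the two staircases, $\mu_j\mapsto\mu_j+2r+2-4j$; since $\sum_{j=1}^r(2r+2-4j)=0$ this preserves the size, the images are odd, strictly decreasing with gaps $\ge 4$ and smallest part $\ge 3$, and the inverse subtracts the same shifts. The second is a bijection $LG_2[n]\to LG'_{-2}[n]$ obtained from the map $g$ of Theorem~\ref{SFGG} by interchanging the roles of parity, i.e.\ replacing $\wp$ by the evenness indicator $\bar\wp=1-\wp$: for $\lambda=(\lambda_1>\cdots>\lambda_r)\in LG_2[n]$ one sets positive parts $\tau_j=\lambda_j+\bar\wp(\lambda_j)+2\sum_{i=j+1}^r\bar\wp(\lambda_i)$ and negative parts $-\bar\wp(\lambda_i)(2i-1)$. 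Each $\tau_j$ is odd; the gap $\tau_j-\tau_{j+1}=(\lambda_j-\lambda_{j+1})+\bar\wp(\lambda_j)+\bar\wp(\lambda_{j+1})$ is at least $4$ in every case, because the difference and the parity correction are respectively $(\ge 2,2)$, $(\ge 3,1)$, $(\ge 4,0)$ when $\lambda_j$ and $\lambda_{j+1}$ are both even, of opposite parity, or both odd (an odd difference that is $\ge 2$ is automatically $\ge 3$); and the hypothesis that $\lambda$ has no part equal to $1$ forces $\tau_r=\lambda_r+\bar\wp(\lambda_r)\ge 3$. The inverse is $\lambda_j=\tau_j-f_j-2\sum_{i>j}f_i$ with the $f_j$ defined as in~\eqref{eqchar1}, exactly as for $g^{-1}$; composing with $\Psi^{-1}$ then yields the bijection $LG_2[n]\to LG_{-2}[n]$ (which one may equally well write out directly as the little-G\"ollnitz analogue of Andrews' map $h$ of Theorem~\ref{AFGG}).

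The steps I would leave as routine are the confirmation of the starting series for $\sum_n LG_2(n)q^n$ and the mechanical verifications that the inverse maps land in the prescribed sets. The genuinely delicate point --- the analogue of the step in Theorem~\ref{SGG} where ``parts $\ge 3$'' becomes ``positive parts $\ge 2(\ell^++1)$''/``$\ge 4$'' --- is to see that the hypothesis ``all parts $\ge 2$'' for $LG_2$ produces \emph{exactly} the bounds ``$\ge 2\ell^+$'' for $LG_{-2}$ and ``$\ge 3$'' (with gap $\ge 4$) for $LG'_{-2}$, rather than merely weaker inequalities: this is what forces the exponent in the series to be precisely $2n^2+n$ and the smallest positive part on the bijective side to be precisely $\tau_r\ge 3$.
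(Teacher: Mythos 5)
Your proposal is correct and follows essentially the same route as the paper: the identical series manipulation $(-q;q^2)_n=q^{n^2}(-q^{-1};q^{-2})_n$ with the two staircase readings $2n^2+n=n(2n+1)=3+7+\cdots+(4n-1)$, and the bijection $LG_2[n]\to LG'_{-2}[n]$ via the map $g$ of Theorem~\ref{SFGG} with $\wp$ replaced by the evenness indicator, exactly as the paper prescribes. The only cosmetic difference is that you reach $LG_{-2}[n]$ by composing with the even staircase shift $\Psi$ rather than invoking the modified Andrews map $h$ of Theorem~\ref{AFGG} directly, which you note as an alternative and which amounts to the same bijection.
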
   

\begin{proof} We have the following.    
\begin{align} \sum_{n=0}^\infty LG_2(n)q^n &= 1+\sum_{n=1}^\infty \frac{q^{n^2+n} (-q;q^2)_n}{(q^2;q^2)_n}\\ &= 1+\sum_{n=1}^\infty \frac{q^{n^2+n} (1+q)(1+q^3)\cdots(1+q^{2n-1})}{(1-q^2)(1-q^4)\cdots(1-q^{2n})}\\ &= 1+\sum_{n=1}^\infty \frac{q^{n^2+n} q^{n^2}(\frac{1}{q}+1)(\frac{1}{q^3}+1)\cdots(\frac{1}{q^{2n-1}}+1)}{(1-q^2)(1-q^4)\cdots(1-q^{2n})}\\ &= 1+\sum_{n=1}^\infty \frac{q^{\overbrace{(2n+1)+(2n+1)+\cdots+(2n+1)}^{n \text{times}}}}{(1-q^2)(1-q^4)\cdots(1-q^{2n})}\\ &\times (\frac{1}{q}+1)(\frac{1}{q^3}+1)\cdots(\frac{1}{q^{2n-1}}+1)\\
&= \sum_{n=0}^\infty LG_{-2}(n)q^n\\
&= 1+\sum_{n=1}^\infty \frac{q^{3+7=\cdots+(4n-1)}}{(1-q^2)(1-q^4)\cdots(1-q^{2n})}\\ &\times (\frac{1}{q}+1)(\frac{1}{q^3}+1)\cdots(\frac{1}{q^{2n-1}}+1)\\
&= \sum_{n=0}^\infty LG'_{-2}(n)q^n.
\end{align}

Bijective Proofs of the 3-way identity \eqref{LG2x}. A bijection $LG_2[n] \rightarrow LG_{-2}[n]$ may be establlished using the proof of Theorem \ref{AFGG} while the bijection in Theorem \ref{SFGG} also works for $LG_2[n]\rightarrow LG'_{-2}[n]$, provided that the parity function $\wp(k)$ in both bijections is  modified to
  \[ \wp(k)  = \begin{cases}
  0 & \mbox{ if $ \text{ $k$ is odd} $}\\
  1 & \mbox{ if \text{ $k$ is even}}.
  \end{cases}
  \]
  
\end{proof}

\begin{theorem}\label{thm3m4}
Let $H(n)$ be the set of signed partitions of $n$ in which positive parts are even and distinct, negative parts are $\equiv 3$ (mod 4) and less than twice the number of positive parts.
Then $LG_2[n] = H[n]$.
\end{theorem}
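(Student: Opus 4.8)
The plan is to give a generating function proof in the style of the proof of Theorem~\ref{thmflg}, with the residue class $1\bmod 4$ replaced throughout by $3\bmod 4$. The starting point is the ``new'' form of the second little G\"ollnitz identity established by Savage and the third author~\cite{SS11}, which I would use in the shape
\[
\sum_{n=0}^{\infty} LG_2(n)\,q^n \;=\; \sum_{m=0}^{\infty}\frac{q^{2m^2+m}\,(-q^3;q^4)_m}{(q^2;q^2)_{2m+1}},
\]
the exact companion of the representation $\sum_n LG_1(n)q^n=\sum_n q^{2n^2-n}(-q;q^4)_n/(q^2;q^2)_{2n}$ used to prove Theorem~\ref{thmflg}. (If one prefers a self-contained route, this identity can be derived from the formula $\sum LG_2(n)q^n = 1+\sum_{n\ge 1}q^{n^2+n}(-q;q^2)_n/(q^2;q^2)_n$ of Theorem~\ref{SLG} by a Gessel--Stanton type $q$-series transformation.)

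The core computation is the same ``pull the powers of $q$ out of the finite theta-like product'' move as in Theorem~\ref{thmflg}. Writing $(-q^3;q^4)_m=(1+q^3)(1+q^7)\cdots(1+q^{4m-1})$ and extracting $q^{4k-1}$ from its $k$-th factor, the extracted powers multiply to $q^{3+7+\cdots+(4m-1)}=q^{2m^2+m}$, so that
\[
q^{2m^2+m}(-q^3;q^4)_m \;=\; q^{4m^2+2m}\,(-q^{-3};q^{-4})_m \;=\; q^{2+4+\cdots+4m}\,(1+q^{-3})(1+q^{-7})\cdots\bigl(1+q^{-(4m-1)}\bigr);
\]
thus the $m$-th summand becomes $\dfrac{q^{2+4+\cdots+4m}}{(q^2;q^2)_{2m+1}}\,(1+q^{-3})(1+q^{-7})\cdots\bigl(1+q^{-(4m-1)}\bigr)$.

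It then remains to read off the combinatorial content of each summand. The factor $(1+q^{-3})\cdots(1+q^{-(4m-1)})$ generates the sets of distinct negative parts contained in $\{3,7,\dots,4m-1\}$, i.e. the integers $\equiv 3\pmod 4$ that are less than $4m$. The factor $q^{2+4+\cdots+4m}/(q^2;q^2)_{2m+1}$, via the staircase bijection (add the strictly decreasing tuple $(4m,4m-2,\dots,2,0)$ to a partition into at most $2m+1$ even parts), generates the sets of distinct positive even parts having either $2m$ or $2m+1$ elements. In either case $\ell^+\in\{2m,2m+1\}$, and the admissible negative parts for a signed partition with $\ell^+$ positive parts are those $\equiv 3\pmod 4$ and less than $2\ell^+$; since neither $4m$ nor $4m+1$ is $\equiv 3\pmod 4$, this set equals $\{3,7,\dots,4m-1\}$ whether $\ell^+=2m$ or $\ell^+=2m+1$. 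Hence the $m$-th summand is exactly $\sum_{\sigma} q^{|\sigma|}$ over $\sigma\in H[n]$ with $\ell^+(\sigma)\in\{2m,2m+1\}$, and, since each configuration of positive parts arises for exactly one value of $m$, summing over $m$ yields $\sum_n |H[n]|\,q^n$. Comparing with the left-hand side gives $LG_2(n)=|H[n]|$. (One should also note at the outset that ``negative parts $\equiv 3\pmod 4$'' in the definition of $H$ must be read as \emph{distinct} parts $\equiv 3\pmod 4$; allowing repetitions already breaks the count at $n=6$.)

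The step I expect to require the most care is this last one: reconciling the parity-coupled index $m$ --- which simultaneously produces the cases $\ell^+=2m$ and $\ell^+=2m+1$ --- with the single uniform bound ``negative parts less than $2\ell^+$,'' i.e. checking that passing from $\ell^+=2m$ to $\ell^+=2m+1$ neither creates nor destroys an admissible negative part, and that every set of positive parts is produced by precisely one $m$. A bijective proof modeled on the map $g$ in the proof of Theorem~\ref{SFGG} --- with $\wp$ replaced by the indicator of oddness and the shifts taken modulo $4$ --- should also be available, but I would present the generating function argument as the primary proof.
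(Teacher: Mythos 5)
Your argument is correct in all the steps you actually carry out, and it is the same method the paper itself intends: the paper's proof of this theorem is a two-line sketch that quotes a Savage--Sills ``new'' form of the second little G\"ollnitz identity from~\cite{SS11} and says to repeat the manipulation of Theorem~\ref{thmflg}. The difference is which form you start from. The paper quotes $\sum_{n\ge0} q^{2n^2+n}(-q^{-1};q^4)_n/(q^2;q^2)_{2n}$, whereas you use $\sum_{m\ge0} q^{2m^2+m}(-q^3;q^4)_m/(q^2;q^2)_{2m+1}$. Your choice is the cleaner one for this purpose: every factor of $(-q^3;q^4)_m$ has a positive exponent to extract, the extracted staircase $2+4+\cdots+4m$ together with $(q^2;q^2)_{2m+1}$ generates exactly the sets of distinct positive even parts with $\ell^+\in\{2m,2m+1\}$, and the residual factors give distinct negative parts from $\{3,7,\dots,4m-1\}$, which is precisely the set of integers $\equiv 3\pmod 4$ below $2\ell^+$ in both cases; your bookkeeping on this point, and the uniqueness of $m=\lfloor \ell^+/2\rfloor$, are right. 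By contrast, running the ``same manipulation'' verbatim on the form the paper quotes leaves the factor $(1+q^{-1})$, which after extraction becomes $(1+q)$ and threatens an odd positive part (or, untouched, a negative part $1\equiv 1\pmod 4$); the paper's sketch never addresses this, so your write-up is in fact more complete than the proof it replaces. Your remark that the negative parts of $H$ must be taken distinct is also correct (the paper omits the word, but e.g. $(2,10,-3,-3)$ would otherwise spoil $n=6$), and it matches the wording of the neighboring theorems.

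The one soft spot is the provenance of your starting identity. You attribute $\sum_{m\ge0} q^{2m^2+m}(-q^3;q^4)_m/(q^2;q^2)_{2m+1}$ to~\cite{SS11}, but the identity the paper cites from that source has denominator $(q^2;q^2)_{2n}$ and numerator $(-q^{-1};q^4)_n$, and the two series are not termwise equal, so passing between them is not a one-line rewrite. If your exact form does appear in~\cite{SS11}, the proof is complete; if not, you must either supply the derivation you only gesture at (``a Gessel--Stanton type transformation'' of the sum in Theorem~\ref{SLG}) or redo the extraction starting from the quoted form and handle the $(1+q^{-1})$ factor explicitly. Everything downstream of the starting identity is sound.
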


\begin{proof}
  In~\cite{SS11} Savage and Sills derived a second new little G\"ollnitz identity and they showed that 
 $$\sum_{n=0}^\infty LG_2(n)q^n=\sum_{n=0}^\infty \frac{q^{2n^2+n}(-q^{-1};q^4)_n}
 {(q^2;q^2)_{2n}}.$$ 
  By using the same manipulation that we used in Theorem~\ref{thmflg}, we will get the required result.
\end{proof}

\section{Conclusion}
This work demonstrates a natural method for interpreting $q$-series in terms of signed partitions;
several examples of classical $q$-series are exploited for this purpose.   The interested reader
could employ the techniques here to provide analogous interpretations for other $q$-series
such as those in the paper of Slater~\cite{S52}.

\end{document}